\newtheorem{thm}{Theorem}[section]
\newtheorem{lemma}[thm]{Lemma}
\newtheorem{cor}[thm]{Corollary}
\newtheorem{claim}[thm]{Claim}
\newtheorem{conj}[thm]{Conjecture}
\theoremstyle{definition}
\newtheorem{defn}[thm]{Definition}
\newtheorem{example}[thm]{Example}
\theoremstyle{remark}
\newcommand{\abs}[1]{\left\lvert#1\right\rvert}
\newcommand{\MCG}{\mathcal{M}\mathrm{od}}	
\newcommand{\HG}{\mathcal{H}}				
\newcommand{\QHS}{\mathcal{V}}				
\newcommand{\BQHS}{\mathcal{B}}		
\newcommand{\QH}{\mathrm{Q}\mathrm{H}}		
\newcommand{\TG}{\mathcal{T}}    		
\newcommand{\JG}{\mathcal{J}}    		
\newcommand{\bbc}{\mathbb{C}}
\newcommand{\bbr}{\mathbb{R}}
\newcommand{\bbz}{\mathbb{Z}}
\newcommand{\calc}{\mathcal{C}}
\newcommand{\cald}{\mathcal{D}}
\newcommand{\calt}{\mathcal{T}}
\title[Quasi-homomorphisms on mapping class groups]{Quasi-homomorphisms on mapping class groups vanishing on a handlebody group}
\author{Jiming Ma}
\address{School of Mathematical Sciences, Fudan University, Shanghai, 200433, P. R. China}
\email{majiming@fudan.edu.cn}
\author{Jiajun Wang}
\address{LMAM, School of Mathematical Sciences, Peking University, Beijing, 100871, P. R. China}
\email{wjiajun@math.pku.edu.cn}
\begin{document}


\maketitle

\begin{abstract} We construct infinitely many linearly independent quasi-homomorphisms on the mapping class group of a Riemann surface with genus at least two which vanish on a handlebody subgroup. As a corollary, we disprove a conjecture in \cite{Reznikov:2000hq}. Another corollary is that there are infinitely many linearly independent quasi-invariants on the Heegaard splittings of three-manifolds.
\end{abstract}

\section{Introduction}

For a group $G$, a function $\varphi:G\rightarrow\bbr$ is called a \emph{quasi-homomorphism} if
$$\abs{\varphi(ab)-\varphi(a)-\varphi(b)}\leq K,\quad \forall a,b\in G,$$
for some fixed $K\in\bbr_{\ge 0}$, and the minimum of $K$  in above inequality is called the \emph{defect} of $\varphi$. Let $\QHS(G)$ be the vector space of all quasi-homomorphisms on $G$. A bounded function is a quasi-homomorphism, and the bounded quasi-homomorphisms form a subspace of $\QHS(G)$, denoted by $\BQHS(G)$. The quotient $\QH(G):=\QHS(G)/\BQHS(G)$ is called the \emph{quasi-homomorphism space} on $G$. For a subgroup $G_1<G$, we denote by $\QH(G, G_1)$ the subspace of $\QH(G)$ in which every element has a representative vanishing on $G_1$.

A quasi-homomorphism $\varphi:G\rightarrow\bbr$ is \emph{homogeneous} if $\varphi(a^m)=m\varphi(a)$ for all $a\in G$. For any quasi-homomorphism $\varphi:G\rightarrow\bbr$, its \emph{homogenization} $\widetilde{\varphi}$ defined by
$$\widetilde\varphi(a)=\lim_{m\rightarrow\infty}\frac{\varphi(a^m)}m,\quad\forall a\in G$$
is homogeneous. We have $[\varphi]=[\widetilde\varphi]$ in $\QH(G)$ and moreover, $\widetilde\varphi$ is \emph{pseudo-homomorphic} in the sense that $\widetilde\varphi(bab^{-1})=\widetilde\varphi(a)$ for any $a,b\in G$. If $\varphi$ is bounded on a subgroup $G_1$, then $\widetilde\varphi$ is vanishing on $G_1$.

Quasi-homomorphisms are related to bounded cohomology, bounded generation and stable commutator length (see  Bestvina-Fujiwara \cite{Bestvina:2002dr} and Calegari \cite{Calegari:2009ia}). In this note, we are interested in the relationship between quasi-homomorphisms and 3-manifolds. So, we are interested on mapping class groups of surfaces.

Let $S=S_{g,b}$ denote an oriented surface of genus $g$ with $b$ punctures. The mapping class group $\MCG(S)$ is the set of isotopy classes of orientation-preserving homeomorphisms, that is $\MCG(S)=\pi_0(\mathrm{Diff}^+(S))$ \cite{Farb:2012}. We require $3g+b>3$, it is easy to see the set of homomorphisms  from $\MCG(S)$ to $\bbr$ has zero or one dimension. However Bestvina-Fujiwara  \cite{Bestvina:2002dr} showed that  $\QH(\MCG(S))$ has infinite dimension, answer a conjecture of Morita \cite{Morita:1999cr}  (see also \cite{Kotschick:2001ia} for related result).

 Since  the braid group $B_n$ is  the mapping class group of a disk with $n$ marked points,   Bestvina and Fujiwara \cite{Bestvina:2002dr}  showed that  $\QH(B_n)$ has infinite dimension as well.  Moreover, some functions in  $\QH(B_n)$  have topological meanings: taking the signature of the link obtained by closing a braid gives a quasi-homomorphism on $B_n$ (Gambando and Ghys \cite{Gambaudo:1999dw}); the Rasmussen $s$ invariant \cite{Rasmussen:2010kg} from Khovanov homology theory and the Ozsv\'ath-Szab\'o $\tau$ invariant \cite{Ozsvath:2003vb} from Heegaard Floer theory both give quasi-homomorphisms on $B_n$ (Brandenbursky \cite{Brandenbursky:2011ia}), and their homogenizations are both the link number of a braid (divided by $2$ for $\tau$);  the Dehornoy floor of a braid $\beta \in B_n$ also gives a quasi-homomorphism on $B_n$ (Malyutin \cite{Malyutin:2005jq}).  From our viewpoint, the Dehornoy floor for braids is more interesting as quasi-homomorphisms on $B_n$  since it is not a link  invariant, in fact, it degenerates to zero when stabilizing $\beta \in B_n$  to $\beta \delta_{n}^{\pm} \in B_{n+1}$, where $\delta_{n}$ is the additional generator from $B_{n}$ to $B_{n+1}$.  Moreover, the Dehornoy floor  is not an invariant for conjugate classes, but it is coarsely defined for conjugate classes with defect 1 (Malyutin \cite{Malyutin:2005jq}). The Dehornoy floor can be used to distinguish $n$-braids with same closure (as links in $\mathbb{S}^{3}$).

\subsection{Main result}
We use standard terms for Heegaard splittings, see \cite{Scharlemann:2002} for details.

We hope quasi-homomorphisms can shed some light on 3-manifolds or Heegaard splittings of 3-manifolds as in the case of quasi-homomorphisms on  braid groups.

For a handlebody $H$ of genus $g>1$ with boundary $S$, the \emph{handlebody group} $\HG$ is the subgroup of $\MCG(S)$ that extends over $H$. We define
$$\QH(\MCG(S),\HG)=\{\alpha\in\QH(\MCG(S))\ \big|\ \text{there exists }f\in\alpha\text{ with } f\equiv0\text{ on } \HG\}.$$
We show the following

\begin{thm}\label{thm:MCH_H_infinite} For a handlebody $H$ of genus $g>1$ with boundary $S$, let $\HG$ be the handlebody group, then
$$\dim\QH(\MCG(S),\HG)=\infty.$$
\end{thm}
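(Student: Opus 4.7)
The plan is to adapt the Bestvina--Fujiwara construction of quasi-homomorphisms from a group action on a hyperbolic space, applied to the action of $\MCG(S)$ on the curve complex $\mathcal{C}(S)$, in such a way that the resulting classes automatically vanish on the handlebody group $\HG$. I would use three known ingredients: (1) $\mathcal{C}(S)$ is Gromov hyperbolic (Masur--Minsky) and pseudo-Anosov elements act as loxodromic isometries satisfying the WPD property, so Bestvina--Fujiwara's machinery applies; (2) the disk set $\mathcal{D} = \mathcal{D}(H) \subset \mathcal{C}(S)$ of meridional curves is quasi-convex (Masur--Minsky, Masur--Schleimer); (3) $\HG$ preserves $\mathcal{D}$ setwise, so after fixing a basepoint $v \in \mathcal{D}$, every geodesic in $\mathcal{C}(S)$ from $v$ to $h v$ with $h \in \HG$ remains inside a uniform $K$-neighborhood $N_{K}(\mathcal{D})$.

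The heart of the argument is the following choice of pseudo-Anosov elements. I would produce two pseudo-Anosovs $\phi_{1}, \phi_{2} \in \MCG(S)$ with pairwise distinct fixed points on the Gromov boundary $\partial \mathcal{C}(S)$, and with the additional property that each of their axes leaves every bounded neighborhood of $\mathcal{D}$. Equivalently, via Klarreich's identification of $\partial \mathcal{C}(S)$ with the space of ending laminations, the stable/unstable laminations of $\phi_{1}, \phi_{2}$ should not lie in the boundary of $\mathcal{D}$. This is possible because, by work of Masur--Schleimer, the boundary of $\mathcal{D}$ is a proper closed subset of the ending lamination space, so one can find pseudo-Anosovs whose laminations avoid it; explicit such examples can also be built from Thurston's construction using multicurves far from any meridian.

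With such $\phi_{1}, \phi_{2}$ in hand, I would invoke Bestvina--Fujiwara's counting quasi-homomorphisms. For each admissible word $w$ in $\phi_{1}^{\pm}, \phi_{2}^{\pm}$, the counting function $c_{w}$ records signed occurrences of the pattern $w$ along a quasi-geodesic from $v$ to $hv$ in $\mathcal{C}(S)$, and for a sequence of words such as $w_{n} = \phi_{1} \phi_{2} \phi_{1}^{2} \phi_{2}^{2} \cdots \phi_{1}^{n} \phi_{2}^{n}$ one obtains linearly independent homogeneous classes in $\QH(\MCG(S))$. The crucial new ingredient is that, by the choice above, long subsegments of the axes of $\phi_{1}, \phi_{2}$ lie outside $N_{K}(\mathcal{D})$; hence no quasi-geodesic from $v$ to $hv$ (for $h \in \HG$) can contain a copy of $w_{n}$ once $n$ is large. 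Therefore each $c_{w_{n}}$ is bounded (in fact identically zero) on $\HG$, and its homogenization represents a class in $\QH(\MCG(S), \HG)$.

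The main obstacle is producing pseudo-Anosov elements whose axes travel far from $\mathcal{D}$ and verifying quantitatively that sufficiently long $w_{n}$-patterns cannot be traced by handlebody geodesics. The latter uses the quasi-convexity of $\mathcal{D}$ together with Morse stability of quasi-geodesics in $\mathcal{C}(S)$; the former requires the description of the boundary of $\mathcal{D}$ in $\partial \mathcal{C}(S)$. Once these geometric inputs are in place, the linear independence of the obtained classes in $\QH(\MCG(S), \HG)$ is formal from Bestvina--Fujiwara's original argument, yielding $\dim \QH(\MCG(S), \HG) = \infty$.
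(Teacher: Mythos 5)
Your high-level architecture matches the paper's (Fujiwara counting quasi-homomorphisms for the $\MCG(S)$-action on $\calc(S)$, basepoint in the disk set, quasi-convexity of $\cald(H)$ so that for $h\in\HG$ the quasi-geodesic from $v$ to $hv$ stays in $N_K(\cald(H))$), but there is a genuine gap in the vanishing step. The counting function $|\alpha|_{\omega}$ counts \emph{$\MCG(S)$-translates} of the pattern $\omega$ along $\alpha$ (this equivariance is what makes $h_\omega$ a quasi-homomorphism). So to conclude $h_{\omega}(h)=0$ for $h\in\HG$ you must rule out the existence of \emph{any} $f\in\MCG(S)$ with $f(\omega)$ inside $N_{K}(\cald(H))$; equivalently, $\omega$ must not lie in the $K$-neighborhood of $\cald(Z)$ for \emph{any} handlebody $Z$ bounded by $S$, since $f(\omega)\subset N_K(\cald(H))$ iff $\omega\subset N_K(\cald(f^{-1}H))$. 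Your choice of $\phi_1,\phi_2$ only guarantees that their axes (hence the $\omega_n$) eventually leave every bounded neighborhood of the single disk set $\cald(H)$; it says nothing about the disk sets of the other handlebodies, and a long segment of your axis could perfectly well fellow-travel $\cald(Z)$ for some other $Z$, producing translates of $\omega_n$ inside $N_K(\cald(H))$ and destroying the vanishing on $\HG$.

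This stronger, uniform-over-all-handlebodies property is exactly the content of the paper's key theorem (Theorem 3.5), and it cannot be obtained by the argument you sketch: properness/closedness of the limit set of one disk set in the boundary of $\calc(S)$ does not help, because every essential simple closed curve is a meridian of some handlebody, so the family of all disk sets (and their limit sets) is spread throughout $\calc(S)$ and its boundary. The paper gets around this by working in the Maher--Schleimer handlebody graph $\mathcal{HG}(S)$: it constructs $\phi$ as a product of pseudo-Anosovs supported in two handlebody groups that are far apart in $\mathcal{HG}(S)$, shows the resulting orbit path is an unbounded quasi-geodesic there, and deduces that long segments of the curve-complex quasi-axis of $\phi$ cannot stay in a bounded neighborhood of any disk set (each disk set is coned off in $\mathcal{HG}(S)$). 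To repair your proposal you would need to replace your boundary-avoidance condition by this kind of input -- e.g.\ the hyperbolicity and infinite diameter of $\mathcal{HG}(S)$, or Maher--Schleimer's Corollary 6.10 producing pseudo-Anosovs far from all disk sets -- and then require that your words $w_n$ contain long subsegments of the axis of such a $\phi$; the rest of your argument (linear independence, boundedness on cyclic subgroups) then goes through as in Bestvina--Fujiwara.
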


Theorem \ref{thm:MCH_H_infinite} evidently follows from

\begin{thm} \label{thm:main} Let $S$ be the boundary of a handlebody $H$ of genus $g>1$, for every  finite collection of cyclic subgroups $C_{1}, C_{2}, \ldots, C_{n} < \MCG(S)$, there exists an infinite dimensional subspace of  $\QH(\MCG(S))$ such that each $h$ in this subspace has the following properties:

\begin{enumerate}
\item  $h$ is bounded on each $C_{i}$;
\item  $h$ is bounded on the stabilizer of every essential curve in $S$;
\item $h$ is bounded on the handlebody group $\HG$.
\end{enumerate}

\end{thm}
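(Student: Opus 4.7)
The plan is to deploy the Bestvina--Fujiwara machinery on the isometric action of $\MCG(S)$ on the curve complex $\mathcal{C}(S)$. Recall that $\mathcal{C}(S)$ is Gromov hyperbolic (Masur--Minsky), pseudo-Anosov mapping classes act loxodromically, and such elements satisfy the WPD property of Bestvina--Fujiwara. Each pseudo-Anosov $f$ then determines a homogeneous counting quasi-homomorphism $\tau_f\in\QHS(\MCG(S))$, and pairwise \emph{inequivalent} pseudo-Anosovs---those whose quasi-axes in $\mathcal{C}(S)$ are not $\MCG(S)$-translates of each other, up to orientation---produce linearly independent classes in $\QH(\MCG(S))$. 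The general principle we will use is: $\tau_f$ is bounded on a subgroup $K<\MCG(S)$ whenever the $K$-orbit of a basepoint $x_0\in\mathcal{C}(S)$ does not fellow-travel the quasi-axis $A_f$ for arbitrarily long distances.

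With this principle in place, conditions (1) and (2) come almost for free. For (2), the stabilizer of an essential simple closed curve $\alpha$ fixes the vertex $\alpha\in\mathcal{C}^{(0)}(S)$, so its orbit has bounded diameter and cannot fellow-travel any quasi-axis for long. For (1), a cyclic group $C_i$ generated by a periodic or reducible element likewise has bounded orbit on $\mathcal{C}(S)$; if $C_i$ is pseudo-Anosov, one needs only to avoid the finitely many equivalence classes $[C_1],\dots,[C_n]$ when selecting the family $\{f_k\}$, which excludes only finitely many candidate axes.

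The substance of the argument is condition (3), which is the main obstacle. The handlebody group $\HG$ coarsely stabilizes the disk complex $\mathcal{D}(H)\subset\mathcal{C}(S)$, so the $\HG$-orbit of a basepoint lies within bounded Hausdorff distance of $\mathcal{D}(H)$. We therefore need infinitely many pairwise inequivalent pseudo-Anosov mapping classes $f_1,f_2,\dots$ whose quasi-axes stay uniformly far from $\mathcal{D}(H)$ in $\mathcal{C}(S)$ and which are inequivalent to the given cyclic generators. To produce them, I would choose a second handlebody $H'$ with $\partial H'=S$ so that the Heegaard splitting $(H,H')$ has large Hempel distance $d_{\mathcal{C}(S)}(\mathcal{D}(H),\mathcal{D}(H'))$. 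A ping-pong construction applied to high powers of a pseudo-Anosov $\psi$ together with a mapping class $\varphi$ that moves $\mathcal{D}(H)$ far from itself yields pseudo-Anosov elements whose quasi-axes project to bounded subsets of $\mathcal{D}(H)$ under the Masur--Minsky nearest-point projection; varying the ping-pong inputs (for example by further conjugation) produces infinitely many pairwise inequivalent examples. Feeding this family into the Bestvina--Fujiwara construction and homogenizing yields the desired infinite-dimensional subspace of $\QH(\MCG(S))$. The decisive quantitative step, and the one I expect to occupy most of the proof, is the uniform bound on the projection of $\HG\cdot x_0$ onto each quasi-axis $A_{f_k}$, which is where the Masur--Minsky bounded geodesic image theorem provides the essential leverage and ensures that every $\tau_{f_k}$ is uniformly bounded on $\HG$.
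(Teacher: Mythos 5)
Your overall architecture matches the paper's: Bestvina--Fujiwara counting quasi-homomorphisms from the action on $\calc(S)$, with (1) and (2) handled by bounded orbits (curve stabilizers and non-pseudo-Anosov cyclic groups) and by discarding the finitely many axes equivalent to the given $C_i$'s. The genuine gap is in your treatment of (3). Your criterion --- quasi-axes of the $f_k$ staying uniformly far from $\cald(H)$, i.e.\ bounded nearest-point projection of $\HG\cdot x_0$ (or of $\cald(H)$) to each axis --- is too weak to bound $h_{\omega_k}$ on $\HG$. The counting quasi-homomorphism $h_{\omega}(g)=C_{\omega,W}(d_0,g(d_0))-C_{\omega^{-1},W}(d_0,g(d_0))$ counts copies $g'(\omega)$ for \emph{arbitrary} $g'\in\MCG(S)$ along the realizing quasi-geodesics. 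For $g\in\HG$ these quasi-geodesics do lie in a bounded neighborhood of $\cald(H)$ by quasi-convexity, but a translate $g'(\omega)$ can hug $\cald(H)$ even when the axis of $f_k$ itself avoids $\cald(H)$: this happens exactly when $\omega$ lies near $\cald(g'^{-1}(H))$, the disk set of some \emph{other} handlebody bounded by $S$. So what you must rule out is long fellow-traveling of the axis with the $B$-neighborhood of $\cald(Z)$ for \emph{every} handlebody $Z$ bounded by $S$, uniformly --- not just for the fixed $H$. Your ping-pong construction using a second handlebody $H'$ of large Hempel distance and bounded projections to $\cald(H)$ does not address this, and the Masur--Minsky bounded geodesic image theorem applied to the single set $\cald(H)$ cannot supply it.

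This uniform statement over all handlebodies is precisely the paper's key Theorem (Theorem \ref{thm:key}): there is a pseudo-Anosov $\phi$ such that for every $B$ there is an $L$ so that no length-$L$ segment of its quasi-axis can be carried by \emph{any} $g\in\MCG(S)$ into the $B$-neighborhood of $\cald(H)$. The paper obtains it by passing to the Maher--Schleimer handlebody graph $\mathcal{HG}(S)$ (the electrification of $\calc(S)$ along the disk sets of all handlebodies, which is hyperbolic of infinite diameter) and building $\phi=gf$ from large-translation pseudo-Anosov elements $f,g$ lying in the handlebody groups of two handlebodies $V,W$ that are far apart in $\mathcal{HG}(S)$; loxodromicity of $\phi$ on $\mathcal{HG}(S)$ then forces every component of $\ell\cap N_B(\cald(Z))$, over all handlebodies $Z$, to have uniformly bounded diameter, and one seeds each $\omega_k$ with a long segment of this particular axis. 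Without this ingredient (or an equivalent uniform control over all disk sets), your final claim that each $\tau_{f_k}$ is bounded on $\HG$ does not follow, so as written the proposal does not prove item (3).
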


\vspace{10pt}

\subsection{An application: Reznikov's conjecture}

A group $G$ is \emph{boundedly generated} if there are finitely many elements $g_1,\cdots,g_N$ in $G$, such that every element $g$ in $G$ can be written as $g_1^{m_1}\cdots g_N^{m_N}$,  for some integers $m_1,\cdots,m_N$. Tavgen showed in \cite{Tavgen} that most (and conjecturally all) non-cocompact lattices in higher rank simple Lie groups are boundedly generated. Farb, Lubotzky and Minsky showed in \cite{Farb:2001ka} that word hyperbolic groups and $\MCG(S_g)$ with $g\geqslant1$ are not boundedly generated. It is believed that $\MCG(S)$ shares some properties with lattices in a semisimple Lie
group. In \cite[Section 13]{Reznikov:2000hq}, the following conjecture is made:

\begin{conj}[Bounded width in Heegaard splittings]\label{conj:bounded_width} For any $g>1$, there are finitely many Dehn twists $\delta_1,\delta_2,\cdots,\delta_N$ on $S_g$ such that any double coset in
$$\HG\diagdown \MCG(S_g)\diagup\HG$$
contains an element of the form $\delta_1^{m_1}\delta_2^{m_2}\cdots\delta_N^{m_N}$  for some integers $m_1,\cdots,m_N$.
\end{conj}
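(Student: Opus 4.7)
The conjecture is in fact false, and the plan is to derive a contradiction from Theorem \ref{thm:main}. Suppose Conjecture \ref{conj:bounded_width} holds, so that there exist Dehn twists $\delta_1,\ldots,\delta_N$ on $S_g$ such that every $\phi \in \MCG(S_g)$ can be written as $\phi = h_1\,\delta_1^{m_1}\cdots\delta_N^{m_N}\,h_2$ for some $h_1,h_2 \in \HG$ and integers $m_i$.

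Apply Theorem \ref{thm:main} to the cyclic subgroups $C_i = \langle \delta_i \rangle$ for $i=1,\ldots,N$. This produces an infinite-dimensional subspace $V \subset \QH(\MCG(S_g))$, every class of which has a representative bounded on each $C_i$ and on $\HG$. Pick a nonzero class $[h] \in V$, choose such a representative $h$, and replace it by its homogenization $\widetilde h$; by the remarks in the introduction $[\widetilde h]=[h] \neq 0$ in $\QH$. Since $h$ is bounded on $\HG$ and on each $C_i$, the defining limit $\widetilde h(a)=\lim_m h(a^m)/m$ is zero for every $a \in \HG$ and every $a \in C_i$, so $\widetilde h \equiv 0$ on $\HG$ and on each $C_i$.

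Let $K$ be the defect of $\widetilde h$. For arbitrary $\phi \in \MCG(S_g)$, write $\phi = h_1 w h_2$ with $w = \delta_1^{m_1}\cdots\delta_N^{m_N}$. Two uses of the defect inequality give $\abs{\widetilde h(\phi)-\widetilde h(h_1)-\widetilde h(w)-\widetilde h(h_2)} \le 2K$, and the $\widetilde h(h_i)$ terms vanish because $\widetilde h \equiv 0$ on $\HG$. A further $N-1$ applications of the defect inequality reduce $\widetilde h(w)$ to $\sum_{i=1}^N \widetilde h(\delta_i^{m_i})$ up to an additive $(N-1)K$, and each summand is zero since $\widetilde h$ vanishes on $C_i$. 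Hence $\abs{\widetilde h(\phi)} \le (N+1)K$ uniformly in $\phi$, so $\widetilde h$ is bounded on $\MCG(S_g)$ and $[\widetilde h]=0$ in $\QH$, contradicting $[h]\neq 0$.

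All of the real work sits in Theorem \ref{thm:main}: once an infinite-dimensional supply of quasi-homomorphisms that vanish on $\HG$ and are bounded on any prescribed finite list of cyclic subgroups is in hand, the disproof above is just a short telescoping via the defect inequality. The main obstacle, therefore, is not in the corollary but in Theorem \ref{thm:main} itself; the conceptual ingredient in this application is simply to see that a bounded-width decomposition of the double-coset space $\HG\backslash\MCG(S_g)/\HG$ through cyclic subgroups $\langle\delta_i\rangle$ forces every such homogeneous quasi-homomorphism to be bounded, which is exactly the outcome Theorem \ref{thm:main} is engineered to prevent.
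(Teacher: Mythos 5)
Your proposal is correct and follows essentially the same route as the paper: assume the bounded-width decomposition, invoke Theorem \ref{thm:main}, and conclude that the resulting quasi-homomorphism is uniformly bounded on $\MCG(S_g)$, a contradiction. The only (harmless) difference is that you use clause (1) of Theorem \ref{thm:main} with $C_i=\langle\delta_i\rangle$ where the paper uses clause (2) (stabilizers of the twisting curves $c_i$), and you spell out the homogenization and telescoping details that the paper leaves implicit.
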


Note that each double coset in the Conjecture \ref{conj:bounded_width} corresponds to a Heegaard splitting, and the conjecture asserts that the genus $g$ Heegaard splittings is (a quotient of) the product of finitely many cyclic groups.

\begin{thm} Conjecture \ref{conj:bounded_width}  is false.
\end{thm}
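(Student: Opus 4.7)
The plan is to argue by contradiction, leveraging Theorem \ref{thm:MCH_H_infinite} to defeat any purported bounded-width decomposition.

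Suppose Conjecture \ref{conj:bounded_width} holds, with witnessing Dehn twists $\delta_1, \ldots, \delta_N$. Then every $f \in \MCG(S_g)$ admits a decomposition
$$ f = h_1 \, \delta_1^{m_1} \delta_2^{m_2} \cdots \delta_N^{m_N} \, h_2, $$
with $h_1, h_2 \in \HG$ and integers $m_i$ depending on $f$. Pick any class $[\varphi] \in \QH(\MCG(S_g), \HG)$, choose a representative $\varphi$ vanishing on $\HG$, and pass to its homogenization $\widetilde\varphi$. As noted in the introduction, $\widetilde\varphi$ still vanishes on $\HG$ and represents the same class in $\QH$; denote its defect by $K$.

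The key computation would be to apply the defect inequality a bounded number of times (in fact $N+1$ applications suffice) to the displayed decomposition, using $\widetilde\varphi(h_1) = \widetilde\varphi(h_2) = 0$ together with the homogeneity identity $\widetilde\varphi(\delta_i^{m_i}) = m_i \, \widetilde\varphi(\delta_i)$, to arrive at the estimate
$$ \abs{\widetilde\varphi(f) - \sum_{i=1}^N m_i \, \widetilde\varphi(\delta_i)} \leq (N+1)\,K. $$
The crucial feature is that the right-hand side is independent of $f$.

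I would then consider the linear map
$$\Phi : \QH(\MCG(S_g), \HG) \longrightarrow \bbr^N, \qquad [\varphi] \longmapsto \bigl( \widetilde\varphi(\delta_1), \ldots, \widetilde\varphi(\delta_N) \bigr).$$
If $\Phi([\varphi]) = 0$, the preceding inequality forces $\abs{\widetilde\varphi(f)} \leq (N+1)K$ uniformly in $f$; but a bounded homogeneous quasi-homomorphism must vanish identically, since $m\,\abs{\widetilde\varphi(g)} = \abs{\widetilde\varphi(g^m)}$ would be bounded in $m$. Therefore $[\varphi] = 0$ in $\QH$, so $\Phi$ is injective. This bounds $\dim \QH(\MCG(S_g), \HG)$ by $N$, contradicting Theorem \ref{thm:MCH_H_infinite}.

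There is no essential obstacle beyond invoking Theorem \ref{thm:MCH_H_infinite}: all the real work, namely constructing an infinite family of linearly independent quasi-homomorphisms vanishing on $\HG$, is already done there, and the disproof of the conjecture reduces to the short telescoping-defect estimate above.
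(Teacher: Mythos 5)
Your argument is correct, but it takes a genuinely different route from the paper. The paper disproves Conjecture \ref{conj:bounded_width} by invoking the stronger Theorem \ref{thm:main}: it produces a single nonzero class $h$ that is bounded on the stabilizers of the curves $c_1,\dots,c_N$ (hence on each cyclic group $\langle\delta_i\rangle$) and on $\HG$, so that the double-coset decomposition makes $h$ bounded on all of $\MCG(S)$, forcing $h=0$ in $\QH(\MCG(S))$ --- a contradiction in one line. You instead use only Theorem \ref{thm:MCH_H_infinite}: after homogenizing, the telescoping defect estimate shows that a class vanishing on $\HG$ is determined up to bounded error by the $N$ values $\widetilde\varphi(\delta_1),\dots,\widetilde\varphi(\delta_N)$, so the evaluation map $\Phi$ is injective and $\dim\QH(\MCG(S),\HG)\le N$, contradicting infinite-dimensionality. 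Your route buys something real: it needs only the headline infinite-dimensionality statement rather than the finer control of Theorem \ref{thm:main} (boundedness on prescribed cyclic subgroups and curve stabilizers), and since you only use homogeneity on powers of the $\delta_i$, your argument applies verbatim when the $\delta_i$ are arbitrary mapping classes (e.g.\ pseudo-Anosov), recovering the paper's remark that the conjecture fails ``in a general sense.'' The paper's route, by contrast, is immediate once Theorem \ref{thm:main} is in hand and records strictly more information (boundedness on the full curve stabilizers). The standard facts you rely on --- that the homogenization has finite defect, satisfies $\widetilde\varphi(\delta^m)=m\widetilde\varphi(\delta)$ for all $m\in\bbz$ including negative exponents, depends only on the class $[\varphi]$ (making $\Phi$ well defined and linear), and vanishes on any subgroup on which $\varphi$ is bounded --- are all consistent with what the paper's introduction asserts, so no gap remains.
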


\begin{proof} Suppose that there exist Dehn twists $\delta_1,\delta_2,\cdots,\delta_N$ along curves $c_1,c_2,\cdots,c_N$ such that any $f \in \MCG(S)$ can be written as $f=a \delta_1^{m_1}\delta_2^{m_2}\cdots\delta_N^{m_N}b$ for some $a,b \in \HG$. By Theorem \ref{thm:main}, there exists a nonzero vector $h\in\QH(\MCG(S))$ that is bounded on the stabilizers of $c_1,\cdots,c_N$ and the handlebody group $\HG$. Hence $h(f)$ is uniformly bounded and $h$ is zero in $\QH(\MCG(S))$, a contradiction.
\end{proof}

In Reznikov's conjecture each $\delta_i$ is a Dehn twist along a simple closed curve. It is a natural question to ask whether Conjecture \ref{conj:bounded_width} holds if we take the $\delta_i$'s to be pseudo-Anosov maps. However, from (1) and (3) of Theorem \ref{thm:main}, this is also not true. So we disprove Reznikov's conjecture in a general sense.

\subsection{Generalizations to the Johnson filtrations and the Casson invariant}

Let $G=\pi_1(S_g)$ be the fundamental group of $S=S_g$. The mapping class group $\MCG(S)$ naturally acts on the lower central series of $G$,
$$G_1 >  G_2 >  G_3 > \cdots>  G_n > \cdots,$$
where $G_1=G$ and $G_n=[G_{n-1},G]$. Consequently, $\MCG(S)$ acts on the nilpotent quotient $G/G_k$, hence we get a homomorphism
$$\rho_k:\MCG(S)\rightarrow \mathrm{Aut}\left(G/G_k\right).$$
The kernel of $\rho_k$ is called the $k$-th (generalized) \emph{Johnson subgroup}, denoted by $\JG_k(S)$. In particular, $\mathcal{J}_2(S)$ is the Torelli group $\TG(S)$, which is the subgroup of $\MCG(S)$  consists of elements which acts trivially on $H_{1}(S)$,  and $\mathcal{J}_3(S)$ is the original Johnson subgroup, which is the subgroup of $\MCG(S)$ generated by Dehn twists along separating simple closed curves (\cite{Farb:2012, Johnson:1985jp}).

Fix a handlebody $W$ bounded by $S$, recall that if $f \in \MCG(S)$ lies in the handelbody group of $W$ if and only  if $f$ extends to a self-homeomorphism of $W$.  If there is a sub-compresionbody $W'$ lies in $W$ with $\partial_{+} W=\partial_{+} W'=S$,  such that  $f$  extends to a homeomorphism of  $W'$, then we say $f$  \emph{partially extends} to $W$. Biringer-Johnson-Minsky \cite{Biringer:2010ur} gave a  criterion whether  some powers of a pseudo-Asonov map $f$ partially extends  to a fixed handlebody $W$.

It seems that the  existence of a map in the Torelli subgroup of $\MCG(S)$ which does not extend to any handlebody bounded by $S$ is due to Johannson-Johnson and Casson, but the first proof in literature is Leininger-Reid \cite{Leininger:2002fe}.

For $g>2$ and sufficiently large $k$, Hain \cite{Hain:2008wb} showed the existence of a pseudo-Anosov map $f \in \mathcal{J}_k(S_{g})$ which does not extend to any handlebody $W$ bounded by $S_{g}$, using some highly sophisticated method. Independently, see also Jorgensen \cite{Jorgensen:2008tz}.

Using  handlebody graph,  Corollary 6.10 of Maher-Schleimer \cite{MaherSchleimer:2018} implies that

\begin{thm}\label{thm:notpartialextend} \cite{MaherSchleimer:2018}
For $S=S_{g}$ with $g\geqslant2$ and any $k \in \mathbb{Z}_{+}$, there exist a pseudo-Asonov map $\phi \in \mathcal{J}_k(S_{g})$ that does not partially extend to any handlebody bounded by $S$.
\end{thm}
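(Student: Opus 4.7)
The plan is to deduce the statement directly from Corollary 6.10 of Maher--Schleimer \cite{MaherSchleimer:2018}, which concerns the action of $\MCG(S)$ on the handlebody graph $\mathcal{H}(S)$ (the graph whose vertices are isotopy classes of handlebodies bounded by $S$, with edges coming from a suitable disjointness relation between their disk sets). The two ingredients are: (i) a translation between partial extension and bounded orbits in $\mathcal{H}(S)$, and (ii) producing a loxodromic element living inside the deep Johnson subgroup $\mathcal{J}_k(S_g)$.

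First, I would verify the key translation: \emph{a pseudo-Anosov $\phi \in \MCG(S)$ partially extends to a handlebody $W$ with $\partial W = S$ if and only if the $\langle \phi \rangle$-orbit of the vertex $[W] \in \mathcal{H}(S)$ is bounded.} For the easy direction, a partial extension of $\phi$ to $W$ amounts to a nonempty $\phi$-invariant collection of meridian disks of $W$; this collection determines a sub-compression body $W'$ whose boundary vertex in $\mathcal{H}(S)$ is at bounded distance from $[W]$ and is fixed by $\phi$, so $[W]$ has bounded $\langle\phi\rangle$-orbit. The converse uses the distance estimates of Masur--Schleimer/Maher--Schleimer: a bounded orbit at a handlebody vertex, for a pseudo-Anosov, forces the existence of a $\phi$-invariant nonempty disk system in $W$, which is exactly partial extension.

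Second, I would invoke Corollary 6.10 of \cite{MaherSchleimer:2018}, which supplies pseudo-Anosov elements of $\MCG(S)$ acting loxodromically on the Gromov-hyperbolic graph $\mathcal{H}(S)$. To land inside $\mathcal{J}_k(S_g)$, I would use that $\mathcal{J}_k$ is normal in $\MCG(S)$: pick any loxodromic $\phi_0$ provided by Maher--Schleimer, with attracting/repelling fixed points $\ell^\pm$ on $\partial \mathcal{H}(S)$. Because $\mathcal{J}_k$ is infinite and normal, and $\MCG(S)$ has non-elementary action on $\mathcal{H}(S)$ (loxodromics with different axes exist), one can choose $\psi \in \mathcal{J}_k$ with $\psi \cdot \{\ell^+,\ell^-\} \cap \{\ell^+,\ell^-\} = \emptyset$. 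Then a standard ping-pong argument shows $\phi := (\psi \phi_0^n \psi^{-1}) \phi_0^n$ is loxodromic on $\mathcal{H}(S)$ for $n$ large. Replacing $\phi_0$ by $\phi_0^N$ for $N$ large enough that $\phi_0^N \in \mathcal{J}_k$ (possible since $\MCG(S)/\mathcal{J}_k$ is residually finite... alternatively pick $\phi_0$ inside $\mathcal{J}_k$ from the outset by noting $\mathcal{J}_k$ already contains pseudo-Anosovs by classical Thurston--Johnson constructions), one arranges $\phi \in \mathcal{J}_k$. Being loxodromic on $\mathcal{H}(S)$, $\phi$ is pseudo-Anosov (reducible and periodic elements are elliptic) and, by the first step, fails to partially extend to any handlebody.

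The main obstacle will be the first step, namely the clean equivalence between partial extension and elliptic/bounded action on $\mathcal{H}(S)$. The forward implication is essentially by definition once one chooses the right handlebody graph, but the converse uses the nontrivial geometry established in \cite{MaherSchleimer:2018} and has to be extracted carefully from their Corollary 6.10. The normal subgroup argument in the second step is by now standard for group actions on hyperbolic spaces, so it should go through as soon as one checks that $\mathcal{J}_k$ does not act with uniformly bounded orbits on $\mathcal{H}(S)$, which follows from the existence of pseudo-Anosov elements in $\mathcal{J}_k$.
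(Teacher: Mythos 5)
You should first note that the paper does not prove this statement at all: Theorem \ref{thm:notpartialextend} is imported verbatim from Maher--Schleimer, whose Corollary 6.10 already produces, in \emph{every} term $\JG_k(S_g)$ of the Johnson filtration, a pseudo-Anosov acting loxodromically on the handlebody/compression body graph; the only thing left implicit is the easy observation that if $\phi$ partially extends to $W$ then $\phi$ preserves the nonempty disk set of the sub-compression body, so $d_{\mathcal{HG}}(v_W,\phi^n(v_W))\leqslant 2$ for all $n$, i.e.\ loxodromic elements cannot partially extend. Your step (i), forward direction, is exactly this observation and is fine (the converse you discuss is not needed for the theorem).

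The genuine gap is in your second step, where you try to manufacture the element of $\JG_k$ yourself from a loxodromic $\phi_0\in\MCG(S)$. Concretely: (a) $(\psi\phi_0^n\psi^{-1})\phi_0^{n}$ does not lie in $\JG_k$; normality only puts the commutator $(\psi\phi_0^n\psi^{-1})\phi_0^{-n}$ there. (b) The proposed repair ``replace $\phi_0$ by $\phi_0^N\in\JG_k$'' is impossible in general: $\MCG(S)/\JG_k$ is not a torsion group (for $k=2$ it is $Sp(2g,\mathbb{Z})$), so a typical loxodromic has no power in $\JG_k$, and residual finiteness is irrelevant to this. (c) Most seriously, your closing claim that the required unboundedness of the $\JG_k$-action on $\mathcal{HG}(S)$ ``follows from the existence of pseudo-Anosov elements in $\JG_k$'' is false: being pseudo-Anosov does not give unbounded orbits on the handlebody graph --- pseudo-Anosov elements of a handlebody group (and these exist even inside the Torelli group, e.g.\ in the twist group of a handlebody) fix a vertex, which is precisely the partial-extension phenomenon the theorem is about. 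Exhibiting a single element of $\JG_k$ with unbounded orbit on $\mathcal{HG}(S)$ is the hard content of Maher--Schleimer's Corollary 6.10 (obtained there via random walks supported on these normal subgroups), so your reduction is circular: it quietly assumes the essential part of the result being cited. If instead you read Corollary 6.10 as already furnishing loxodromic pseudo-Anosovs inside $\JG_k$ --- which is how the paper uses it --- then your entire second step is unnecessary and only the easy direction of step (i) remains to be said.
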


We use quasi-homomorphisms to study the structure of the Johnson subgroups, we have the following

\begin{thm}\label{thm:generalization}
For a handlebody $H$ of genus $g>1$ with boundary $S$,  we have
$$\dim\QH(\JG_n(S),\HG\cap \JG_n(S))=\infty,\quad\text{for any}\quad n\geqslant2.$$
\end{thm}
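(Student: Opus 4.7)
The idea is to redo the construction behind Theorem \ref{thm:main} entirely inside the subgroup $\JG_n(S)$. Two inputs are essential. First, $\JG_n(S)$ is normal in $\MCG(S)$, so it inherits the action on $\calc(S)$, and this restricted action is non-elementary and acylindrical; in particular it contains pseudo-Anosovs acting as WPD loxodromic isometries. Second, by Theorem \ref{thm:notpartialextend}, inside $\JG_n(S)$ we may choose a pseudo-Anosov $\phi_{0}$ that does not partially extend to the fixed handlebody $H$. These are precisely the two ingredients that fed into the proof of Theorem \ref{thm:main}, only now housed inside $\JG_n(S)$.

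First I would use the normality of $\JG_{n}(S)$ in $\MCG(S)$ to produce many independent pseudo-Anosovs in $\JG_{n}(S)$: for any $\psi\in\MCG(S)$ the conjugate $\psi\phi_{0}^{N}\psi^{-1}$ lies in $\JG_{n}(S)$, as does the commutator $[\phi_{0}^{N},\psi]$. A standard ping-pong argument on $\calc(S)$, applied to conjugates by a suitable sequence $\psi_{k}\in\MCG(S)$ and with exponents $N_{k}$ chosen large, produces an infinite family $\phi_{1},\phi_{2},\ldots\in\JG_{n}(S)$ of pseudo-Anosovs whose quasi-axes are independent in the Bestvina-Fujiwara sense. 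Because the property of not partially extending to $H$ is conjugation-equivariant (conjugation replaces $H$ by another handlebody bounded by $S$) and survives passage to commutators of the form used above, Corollary~6.10 of \cite{MaherSchleimer:2018} lets us arrange that the quasi-axes of the $\phi_{k}$ stay uniformly far from the $(\HG\cap\JG_{n}(S))$-orbit in the handlebody graph.

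Applying the Bestvina-Fujiwara construction to $\{\phi_{k}\}$ acting on $\calc(S)$ yields infinitely many linearly independent homogeneous quasi-homomorphisms $h_{1},h_{2},\ldots$ on $\JG_{n}(S)$. As in the proof of Theorem \ref{thm:main}, the handlebody-graph estimate prevents any element of $\HG\cap\JG_{n}(S)$ from fellow-travelling an axis of $\phi_{k}$ long enough for the Bestvina-Fujiwara counting function to grow, so each $h_{k}$ is bounded on $\HG\cap\JG_{n}(S)$. Since a homogeneous quasi-homomorphism bounded on a subgroup vanishes on that subgroup, the $h_{k}$ descend to an infinite-dimensional subspace of $\QH(\JG_{n}(S),\HG\cap\JG_{n}(S))$, completing the proof.

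The main technical hurdle is the second step: arranging simultaneously Bestvina-Fujiwara independence on $\calc(S)$ and uniform distance from the $(\HG\cap\JG_{n}(S))$-orbit on the handlebody graph, all while staying inside $\JG_{n}(S)$. The normality of $\JG_{n}(S)$ in $\MCG(S)$ is what makes this feasible, since it gives access to the full $\MCG(S)$-worth of conjugates and thus to the generic pseudo-Anosov arguments used in \cite{MaherSchleimer:2018} without ever leaving the Johnson filtration step. Once this step is in hand, every other piece is a verbatim rerun of the argument of Theorem \ref{thm:main}.
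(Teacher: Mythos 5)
Your outline diverges from the paper at its central step, and that step has a genuine gap. The paper does not use Theorem \ref{thm:notpartialextend} in its proof of this statement at all; what makes Step 4 of the proof of Theorem \ref{thm:main} work is the quantitative conclusion of Theorem \ref{thm:key}: for every $B$ there is an $L$ such that \emph{no} $g\in\MCG(S)$ maps a length-$L$ segment of the quasi-axis $\ell$ into the $B$-neighborhood of $\cald(H)$ (equivalently, no length-$L$ segment of $\ell$ lies $B$-close to the disk set of \emph{any} handlebody). This is what forces the counting functions $C_{\omega,W}(d_0,g(d_0))$ to vanish when $g\in\HG$, because by Theorem \ref{thm:quasiconvex} the relevant quasi-geodesics stay near $\cald(H)$. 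The property you import instead --- that $\phi_0\in\JG_n(S)$ does not partially extend to any handlebody --- is a qualitative statement about powers of $\phi_0$ preserving a compression body. It does not rule out that for every $L$ some length-$L$ segment of the axis of $\phi_0$ lies within $B$ of $\cald(Z_L)$ for a \emph{varying} sequence of handlebodies $Z_L$, which is exactly the scenario the vanishing argument must exclude. So your key sentence, that Corollary 6.10 of \cite{MaherSchleimer:2018} ``lets us arrange that the quasi-axes of the $\phi_k$ stay uniformly far from the $(\HG\cap\JG_n(S))$-orbit in the handlebody graph,'' is the missing content, not a consequence of what you cite; moreover the quantifier is off, since the counting in $h_\omega$ uses translates of $\omega$ by mapping classes, so ``far from the $\HG\cap\JG_n(S)$-orbit'' is not the condition needed. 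Likewise, the claim that the relevant property ``survives passage to commutators'' $[\phi_0^N,\psi]$ is unsupported: neither being pseudo-Anosov nor avoidance of disk sets passes to such products in any evident way.

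For comparison, the paper's mechanism for getting the needed element inside $\JG_n(S)$ is different and avoids these issues: it keeps the single special pseudo-Anosov $\phi_1$ of Theorem \ref{thm:key} (which need not lie in $\JG_n(S)$), picks pseudo-Anosovs $\phi_2,\phi_3\in\JG_n(S)$ (possible since $\JG_n(S)$ is a nontrivial normal subgroup), passes to powers so that $\phi_1,\phi_2,\phi_3$ generate a Schottky subgroup, and takes $\phi=\phi_1^{a}\phi_2^{b}\phi_1^{-a}\phi_3^{c}$ with $a\ll b\ll c$. Normality puts $\phi$ in $\JG_n(S)$, and the quasi-isometrically embedded Cayley graph of the Schottky group guarantees that a quasi-axis of $\phi$ contains a long segment of $\ell$; the segment $\omega_j$ is then chosen to contain that copy, so the avoidance property of Theorem \ref{thm:key} is inherited by construction, and Steps 1--4 are rerun inside $\JG_n(S)$ with a suitable $\psi\in\JG_n(S)$. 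If you want to salvage your route, you would need to show (not merely quote) that one can find elements of $\JG_n(S)$ whose orbits make linear progress in the handlebody graph $\mathcal{HG}(S)$ --- i.e., reprove something like Theorem \ref{thm:key} inside $\JG_n(S)$ --- rather than deduce it from non-partial-extension.
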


It is interesting to note that the Johnson subgroup $\mathcal{J}_3(S_g)$ is finitely generated  when $g\geqslant4$ (\cite{Church2017}). It is a classical result that $\mathcal{J}_3(S_2)$ is not finitely generated (\cite{McCullough1986}).

\vspace{10pt}
Casson's invariant $\lambda$ is an integer valued invariant for 3-dimensional  integral homology spheres. From the standard genus $g$ Heegaard splitting $S^3=H_{+}\cup_{S} H_{-}$ of $S^3$,  for any $f \in \TG(S)$, we can cut $\mathbb{S}^3$ along $S$ to obtaining to handlebodies $H_
{+}$ and $H_{-}$, then we  re-glue them using $f$, the resulting 3-manifold $M_{f}$ is an integral homology sphere.

In a series of papers (see \cite{Morita:1989cr, Morita:1999cr} for more references), Morita studied the relation between the structure of the mapping class groups and invariants of 3-manifolds. In particular,  Morita \cite{Morita:1989cr} showed  that for every integral homology sphere $M$, there is  a $g \in \mathbb{N}$, such that $M$ can be obtained from  the genus $g$ Heegaard splitting of  $\mathbb{S}^3$  and some $f \in \mathcal{J}_3(S_{g}) \subset \TG(S_{g})$. Moreover,

\begin{thm}  [Morita \cite{Morita:1989cr}]\label{theorem:morita} For the genus $g$ surface $S_g$, the map $\lambda:\mathcal{J}_3(S_{g})\rightarrow \bbz$ is a homomorphism.
\end{thm}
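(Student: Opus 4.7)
The plan is to realize the function $f \mapsto \lambda(M_f)$ on the Torelli group $\TG(S_g)$ as a $\bbz$-valued quasi-homomorphism whose defect 2-cocycle factors through the first Johnson homomorphism $\tau_1 \colon \TG(S_g) \to \Lambda^3 H_1(S_g;\bbz)/H_1(S_g;\bbz)$; the conclusion will then follow formally from the identification $\JG_3(S_g) = \ker\tau_1$.

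First I would fix the standard genus-$g$ Heegaard splitting $\mathbb{S}^3 = H_+ \cup_{S_g} H_-$ and, for $f \in \TG(S_g)$, form the integral homology sphere $M_f = H_+ \cup_f H_-$. Define the defect
$$c(f_1, f_2) := \lambda(M_{f_1 f_2}) - \lambda(M_{f_1}) - \lambda(M_{f_2}) \in \bbz,$$
which is a normalized group 2-cocycle on $\TG(S_g)$ by associativity of composition in the mapping class group.

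The central step---and the main obstacle---is to show that $c(f_1,f_2)$ is determined by the images $\tau_1(f_1), \tau_1(f_2)$. For this I would use Casson's description of $\lambda(M_f)$ as one-half of the signed intersection count of the Lagrangian subvarieties $R^*(H_+)$ and $f_* R^*(H_-)$ inside the irreducible $SU(2)$ representation variety $R^*(S_g)$ of $\pi_1(S_g)$, and analyze how this intersection number transforms under composition of mapping classes. The failure of additivity is controlled by the first-order action of $f$ on $R^*(S_g)$ near the abelian locus, which is precisely what $\tau_1$ records (via the identification of the tangent direction with a class in $\Lambda^3 H_1(S_g;\bbz)/H_1(S_g;\bbz)$). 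A careful accounting should yield an explicit skew-bilinear pairing on $\mathrm{Im}(\tau_1)$ equal to $c$. As an alternative route, one could instead decompose $f_1, f_2$ into generators of $\TG(S_g)$ (bounding-pair maps together with separating Dehn twists, following Johnson), compute the Casson contributions of each factor via Casson's surgery formula, and show combinatorially that the additivity obstruction depends only on the bounding-pair components, which are precisely those seen by $\tau_1$.

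Once this factorization of $c$ through $\tau_1 \times \tau_1$ is established, the theorem is immediate: for $f_1, f_2 \in \JG_3(S_g) = \ker\tau_1$ we have $c(f_1, f_2) = 0$, hence $\lambda(M_{f_1 f_2}) = \lambda(M_{f_1}) + \lambda(M_{f_2})$, and $\lambda \colon \JG_3(S_g) \to \bbz$ is a homomorphism. All of the geometric and gauge-theoretic content of the argument is concentrated in the cocycle computation; the descent from $\TG(S_g)$ to $\JG_3(S_g)$ is purely formal.
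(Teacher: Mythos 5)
The paper does not prove this statement at all: it is quoted from Morita \cite{Morita:1989cr}, so the only fair comparison is with Morita's own argument, and against that standard your write-up is a plan rather than a proof. The formal outer layer is fine --- $c(f_1,f_2)=\lambda(M_{f_1f_2})-\lambda(M_{f_1})-\lambda(M_{f_2})$ is a coboundary-type $2$-cocycle on $\TG(S_g)$, and Johnson's theorem does identify $\JG_3(S_g)$ with $\ker\tau_1$, so \emph{if} $c$ factored through $\tau_1\times\tau_1$ the homomorphism property on $\JG_3(S_g)$ would follow formally. But that factorization is precisely the mathematical content of Morita's theorem, and you do not establish it; the sentence ``a careful accounting should yield an explicit skew-bilinear pairing'' is where the entire proof is supposed to happen. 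In particular, even the basic structural features you ascribe to $c$ (that it depends only on $\tau_1(f_1),\tau_1(f_2)$, and that it is a bilinear pairing on $\mathrm{Im}\,\tau_1$) are asserted, not derived.

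Both routes you sketch hide serious obstacles. In the representation-variety route, $\lambda(M_f)$ is an intersection number of Lagrangians in the (singular) $SU(2)$ character variety, and the failure of additivity under composition is not a ``first-order computation near the abelian locus'': the character variety is singular exactly along the reducibles, the relevant perturbation/isotopy analysis there is delicate (this is the hard part of Casson--Walker-type arguments), and the identification of the leading-order behaviour with $\tau_1\in\Lambda^3 H_1(S_g;\bbz)/H_1(S_g;\bbz)$ is a substantive theorem, not a dictionary entry. In the surgery-formula route, writing $f_1,f_2$ as products of bounding-pair maps and separating twists does not make additivity combinatorial: composing with a twist along a separating curve $\gamma$ changes $\lambda$ by a term involving the Alexander polynomial of $\gamma$ \emph{inside the manifold already built}, which depends on the other mapping class, so showing that the net obstruction is seen only by $\tau_1$ is again the heart of Morita's work (carried out in \cite{Morita:1989cr} via a careful analysis of exactly these correction terms), not a formality. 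As it stands, your argument reduces the theorem to an unproved claim that is at least as strong as the theorem itself.
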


So, in particular, Casson's invariant is a nonzero vector in $\QH(\JG_3(S),\HG \cap \JG_3(S)).$

\vspace{10pt}

\subsection{Quasi-invariants for Heegaard splittings}

We make the following definition:

\begin{defn} For a set $X$ with an equivalence relation $\sim$, a function $\varphi: X \rightarrow\bbr$ is called a \emph{quasi-invariant} on $X/\sim$ if there exists some $K\geqslant0$ such that
$$\abs{f(x)-f(y)}\leqslant K\quad\text{whenever}\quad x\sim y.$$
When $K=0$, a quasi-invariant becomes an invariant on $X/\sim$.
\end{defn}

The Dehornoy floor  for  conjugate classes of braids in $B_n$ is a quasi-invariant \cite{Malyutin:2005jq}. We give another example as follows.

\begin{example}
Take the unit disk $B^3$ in $\bbr^3=\bbr\times\bbc$, and consider the four points
$$A_1=\left(-\frac{\sqrt{2}}{2},\frac{\sqrt{2}}{2},0\right),\ A_2=\left(-\frac{\sqrt{2}}{2},-\frac{\sqrt{2}}{2},0\right),\ A_3=\left(\frac{\sqrt{2}}{2},\frac{\sqrt{2}}{2},0\right),\ A_4=\left(\frac{\sqrt{2}}{2},-\frac{\sqrt{2}}{2},0\right),$$
and the disks
$$D_1=\left\{(x,y,0)\ \Big|\ x^2+y^2=1,x\leqslant-\frac{\sqrt{2}}{2}\right\},\quad D_2=\left\{(x,y,0)\ \Big|\ x^2+y^2=1,x\geqslant\frac{\sqrt{2}}{2}\right\}.$$
Let $S_{0,4}$ be the $\partial B^3$ with $A_i$'s punctured. Let $\calt$ be isotopy classes of orientation-preserving diffeomorphisms of $B^3$ with $D_1\cup D_2$ fixed setwise, that is, $\calt=\pi_0(\mathrm{Diff}^+(B^3,D_1\cup D_2))$. Let $C$ be the curve $(0,\cos\theta,\sin\theta)$ and $\delta_C:B^3\rightarrow B^3$ be the function defined by
$$\delta_C(x,y,z)=\begin{cases}(x,y,z), & \text{if } \abs{x}\geqslant\frac12,\\ (x,y\cos h(x)+z\sin h(x),-y\sin h(x)+z\cos h(x)), &\text{if } \abs{x}\leqslant\frac12,\end{cases}$$
where $h(x)$ is a smooth monotonically increasing function such that $h(x)\equiv0$ for $x\leqslant-\frac12$ and $h(x)\equiv2\pi$ for $x\geqslant\frac12$. When restricted to $S$, $\delta_C$ is a Dehn twist along $C$.

The disk $D_3=\left\{x=0, y^2+z^2\leqslant1\right\}$ is the unique essential proper disk in $(B^3,D_1\cup D_2)$. Then $\calt$ is the stabilizer of the disk $D_3$ in $\MCG(S)$. By \cite[Theorem 1.1]{Bestvina:2007et}, $\QH(\MCG(S),\calt)$ is infinite dimensional.

Note that every double coset $\calt f\calt$ corresponds to a two-bridge link in the 3-sphere. Hence for any $f\in \MCG(S)\setminus\calt$ and $h\in\QH(\MCG(S),\calt)$, we have a numeric function $h(f)$, which is a quasi-invariant of two-bridge links.

\end{example}

Let $f_1,\cdots,f_n$ be quasi-invariants on a set $X$ with an equivalence relation $\sim$. $f_1,\cdots,f_n$ are \emph{linearly dependent} if there exist $(a_1,\cdots,a_n)\in\bbr^n\setminus\left\{0\right\}$ and $M>0$ such that
$$\abs{a_1f_1(x)+a_2f_2(x)+\cdots+a_nf_n(x)}<M,\qquad \forall\, x\in X.$$

Let $S$ be the boundary of the genus $g$ handlebody $H$,  and $\HG$ be the handlebody group. For $\phi \in \MCG(S)$, the double coset $\HG\phi\HG$ gives a genus $g$ Heegaard splitting of some 3-manifold.

\begin{cor} For each $g>1$, there exist infinitely many linearly independent quasi-invariants on the set of genus $g$ Heegaard splittings of closed orientable 3-manifolds.
\end{cor}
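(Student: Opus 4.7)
The plan is to derive the corollary directly from Theorem \ref{thm:MCH_H_infinite}. Since $\QH(\MCG(S),\HG)$ is infinite dimensional, I would pick a countable linearly independent collection of classes $[h_1],[h_2],\ldots$ in this subspace, and for each $i$ a representative $h_i$ that vanishes identically on $\HG$ and has defect $K_i$.

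The first step is to show that each $h_i$ descends to a quasi-invariant on the double coset space $\HG\backslash\MCG(S)/\HG$. For any $a,b\in\HG$ and $\phi\in\MCG(S)$, two applications of the defect inequality combined with $h_i(a)=h_i(b)=0$ give
$$\abs{h_i(a\phi b)-h_i(\phi)}\leqslant 2K_i.$$
Thus, picking a set-theoretic transversal of $\HG\backslash\MCG(S)/\HG$ and writing $\bar h_i$ for the resulting function, any two lifts of the same double coset have $h_i$-values within $2K_i$ of one another, which is exactly the quasi-invariance condition.

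The second step is to establish linear independence. Suppose some nontrivial linear combination $\sum_{i=1}^{n}a_i\bar h_i$ is bounded by $M$ on the double coset space. Since every element of $\MCG(S)$ lies in a double coset, the above bound yields
$$\abs{\sum_{i=1}^{n} a_i h_i(\phi)}\leqslant M+\sum_{i=1}^{n} 2\abs{a_i}K_i\qquad\text{for all }\phi\in\MCG(S).$$
Hence $\sum_i a_i h_i$ is a bounded quasi-homomorphism on $\MCG(S)$, so $\sum_i a_i[h_i]=0$ in $\QH(\MCG(S))$. Since $\QH(\MCG(S),\HG)$ is a subspace of $\QH(\MCG(S))$, the chosen linear independence of $[h_1],\ldots,[h_n]$ forces $a_1=\cdots=a_n=0$, a contradiction.

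Finally, each double coset $\HG\phi\HG$ encodes a genus $g$ Heegaard splitting of a closed orientable $3$-manifold, so the $\bar h_i$ provide infinitely many linearly independent quasi-invariants on the set of such Heegaard splittings. There is essentially no obstacle beyond Theorem \ref{thm:MCH_H_infinite}; the corollary is just the observation that a quasi-homomorphism vanishing on $\HG$ is automatically $\HG$-bi-invariant up to bounded error, and that bounded linear combinations on double cosets pull back to bounded linear combinations on $\MCG(S)$.
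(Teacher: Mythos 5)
Your proposal is correct and follows exactly the route the paper intends: the paper simply states that the corollary ``evidently follows'' from Theorem \ref{thm:MCH_H_infinite}, and your argument supplies the routine verification (the bound $\abs{h_i(a\phi b)-h_i(\phi)}\leqslant 2K_i$ for $a,b\in\HG$, and the pullback of a bounded combination on double cosets to a bounded quasi-homomorphism on $\MCG(S)$) that the paper leaves implicit.
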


\begin{proof} It evidently follows from Theorem 1.1. \end{proof}

It has been the wish of many topologists to find interesting invariants for Heegaard splittings.  However, we think it more promising and reasonable to study quasi-invariants of Heegaard splittings. We hope that interesting quasi-invariants with geometric/topological meaning can be found.

\section{Preliminaries}

Let $S=S_{g}$ be a compact orientable surface of genus $g>1$, Harvey \cite{Harvey81} associated to  $S$  the following simplicial complex, which plays important roles  in low dimensional geometry and topology, Teichm\"uller theory, Kleinian groups  and  mapping class groups, see for example,   \cite{Masur:1999tc, Masur:2000tc} :

\begin{defn}
The \emph{curve complex} $\calc(S)$ is the simplicial complex
whose vertices correspond to the isotopy classes of simple closed curves in $S$ and whose  $k$-simplices correspond to the sets $\{ v_{0}, v_{0}, \ldots,v_{k}\}$ of distinct curves that have pairwise disjoint representatives.
\end{defn}

A celebrated theorem of
Masur-Minsky  \cite{Masur:1999tc} is the following:

\begin{thm}\emph{(Masur-Minsky)} $\calc(S)$ is $\delta$-hyperbolic for some $\delta\geqslant0$.
\end{thm}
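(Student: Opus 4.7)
The plan is to follow the original Masur-Minsky strategy, which produces Gromov-hyperbolicity of $\calc(S)$ from the geometry of Teichm\"uller space. Write $\mathcal{T}(S)$ for the Teichm\"uller space of $S$. I would first define a coarse projection $\pi:\mathcal{T}(S)\to\calc(S)$ by letting $\pi(X)$ be any simple closed curve of hyperbolic length at most the Bers constant on $X$. A Wolpert-type comparison of hyperbolic length under Teichm\"uller deformation shows that $\pi$ is coarsely Lipschitz, and Bers' theorem makes it coarsely surjective. Thus for any two vertices $a,b\in\calc(S)$ I can pick Teichm\"uller representatives, join them by the Teichm\"uller geodesic $g_{ab}$, and take $\pi\circ g_{ab}$ as a candidate path in $\calc(S)$.

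The heart of the proof is to show that these paths are uniform unparametrized quasi-geodesics in $\calc(S)$. For this I would analyze how the extremal (or hyperbolic) length of a fixed simple closed curve $\alpha$ varies along a Teichm\"uller geodesic $g_t$ coming from a quadratic differential $q$: the length is controlled by the horizontal and vertical masses of $\alpha$ relative to $q$, and one obtains a convexity-type estimate showing that the set of times at which $\alpha$ is short is, up to bounded ambiguity, a single interval $I_\alpha$, with $\alpha$ attaining its minimal length near the midpoint of $I_\alpha$. From this ``active interval'' picture one extracts the unparametrized quasi-geodesic property of $\pi\circ g_{ab}$.

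With these projected paths in hand I would verify Gromov's slim-triangle criterion. Given three vertices $a,b,c\in\calc(S)$, form a Teichm\"uller triangle between chosen lifts and project its three sides to $\calc(S)$. Thinness follows from a contraction/active-interval argument: a vertex appearing along one projected side corresponds to a curve $\alpha$ that is short inside a Teichm\"uller ball $B_\alpha$, and geometric considerations in $\mathcal{T}(S)$ force one of the other two sides of the triangle to meet a bounded neighborhood of $B_\alpha$, placing its projection within uniformly bounded distance of $\alpha$. Since the quasi-geodesic and thinness constants depend only on $S$, Gromov-hyperbolicity of $\calc(S)$ follows.

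The main technical obstacle is the ``active interval'' lemma for short curves along a Teichm\"uller geodesic; this is where the specific geometry of quadratic differentials genuinely enters, and the remainder of the argument is a more schematic verification of Gromov's criterion. As an alternative route, one could bypass Teichm\"uller space entirely via the combinatorial proof of Hensel-Przytycki-Webb using \emph{unicorn paths}, which yields the slim-triangle condition directly on $\calc(S)$ and moreover produces a hyperbolicity constant independent of $S$.
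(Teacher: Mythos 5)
This statement is quoted background: the paper cites Masur--Minsky \cite{Masur:1999tc} and gives no proof of its own, so there is nothing internal to compare your argument against. Judged on its own terms, your outline does reproduce the strategy of the original proof: the coarsely Lipschitz, coarsely surjective systole projection from $\mathcal{T}(S)$ to $\calc(S)$, the projection of Teichm\"uller geodesics as the preferred path family, and the ``active interval'' analysis of when a curve is short along a Teichm\"uller geodesic, which you correctly single out as the technical heart. The reference to the Hensel--Przytycki--Webb unicorn-path argument is also a legitimate alternative route (and the only one currently known to give a hyperbolicity constant independent of $S$).

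Two caveats. First, what you have written is a plan, not a proof: the coarse Lipschitz estimate, the active-interval lemma, and the contraction statement are all asserted rather than established, and each of these is where the quadratic-differential geometry actually has to be used. Second, your logical ordering hides a potential circularity. You propose to first show the projected paths are uniform unparametrized quasi-geodesics in $\calc(S)$ and then check slim triangles; but quasi-geodesity of these paths in $\calc(S)$ is not something one can verify ahead of time by elementary means --- in Masur--Minsky it is an output of their hyperbolicity criterion (a path family with a coarse contraction property forces hyperbolicity \emph{and} forces the paths to be quasi-geodesics), not an input. So if you carry this out you should phrase the conclusion through such a criterion (Masur--Minsky's contraction criterion, or Bowditch-type thin-triangle criteria with the required length control on the path family), rather than deducing hyperbolicity directly from slimness of one family of candidate paths.
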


A geodesic metric space $X$ is \emph{$\delta$-hyperbolic} ($\delta\geqslant0$) if, for any geodesic triangle in $X$, the closed $\delta$-neighborhood of any two edges contains the third edge. In a $\delta$-hyperbolic space, any two $(a,b)$-quasi-geodesics with the same end points are $c$ parallel, with the constant $c$ depends on $a,b$ and $\delta$ only. This fact will be used in Section 3.

Let  $H$ be an orientable handlebody of genus at least 2, and fix a homeomorphism from $\partial H$ to $S$, the \emph{disk complex} $\cald(H)$ is the sub-complex of $\calc(S)$ with vertexes consisting of simple closed curves which bounds a disk in $H$, and   $k$-simplices  defined from the simplicial structure of   $\calc(S)$. $\cald(H)$ is a connect complex,  but the inclusion map from $\cald(H)$ to   $\calc(S)$   is not a  quasi-isometric embedding \cite {Masurschleimer:2013}. The following theorem in \cite{Masur:2004tc} is useful for our purpose:

\begin{thm}\emph{(Masur-Minsky)} \label{thm:quasiconvex}The disk set $\cald(H)$ is quasi-convex in  $\calc(S)$.
\end{thm}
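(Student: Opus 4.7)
The plan is to follow the disk surgery strategy that is standard for results of this type. Let $D_0$ and $D_n$ be meridian disks in $H$ with $\partial D_0 = v_0$ and $\partial D_n = v_n$ two vertices of $\cald(H)$, and let $\gamma = (v_0, v_1, \ldots, v_n)$ be a $\calc(S)$-geodesic from $v_0$ to $v_n$. To establish quasi-convexity, it suffices to produce a uniform constant $R$ and, for every $i$, a meridian disk whose boundary lies within $\calc(S)$-distance $R$ of $v_i$.

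First I would set up the surgery operation. Put $D_0$ and $D_n$ into minimal position; since $H$ is irreducible, an innermost-disk argument lets us assume $D_0 \cap D_n$ is a disjoint union of arcs. Pick an arc $\alpha \subset D_0 \cap D_n$ that is outermost on $D_n$, so $\alpha$ together with a subarc $\beta \subset \partial D_n$ bounds a subdisk $E \subset D_n$ with $E \cap D_0 = \alpha$. Cutting $D_0$ along $\alpha$ splits it into two pieces, and gluing each to a parallel copy of $E$ produces two new meridian disks whose boundaries lie in $\partial D_0 \cup \beta$ and collectively meet $\partial D_n$ in strictly fewer points than $\partial D_0$ did. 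Keeping an essential disk at each stage (which exists since $H$ is irreducible) and iterating gives a finite sequence of meridian disks $D_0 = F_0, F_1, \ldots, F_m = D_n$ with consecutive boundaries either disjoint or at uniformly bounded $\calc(S)$-distance.

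The next task is to interpret this sequence geometrically in $\calc(S)$. The sequence $(\partial F_j)$ gives a $1$-Lipschitz path in $\calc(S)$ from $v_0$ to $v_n$ lying entirely in $\cald(H)$, and the content of the theorem is that such a surgery path is an unparametrized quasi-geodesic with constants depending only on the topology of $S$. Granting this, $\delta$-hyperbolicity of $\calc(S)$ (the Masur-Minsky hyperbolicity theorem already cited) implies that the surgery path fellow-travels every geodesic with the same endpoints, including $\gamma$, within a distance $R$ depending only on $\delta$ and the quasi-geodesic constants. This yields the desired uniform neighbourhood bound and hence quasi-convexity of $\cald(H)$.

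The hard part is establishing the quasi-geodesic estimate for the surgery path. A priori, repeated surgery could meander or backtrack in $\calc(S)$, and one needs a monotone quantity to rule this out. The standard device is to track the geometric intersection number $i(\partial F_j, \partial D_n)$, which strictly decreases under the surgery move, together with a bound on how much $\calc(S)$-distance can be gained per surgery step in terms of this intersection count (a logarithmic estimate of Hempel type suffices). Marshalling these estimates into a genuine quasi-geodesic bound, and ensuring the constants are uniform over all choices of $D_0$ and $D_n$, is the most technical ingredient of the argument and where I expect the bulk of the work to lie.
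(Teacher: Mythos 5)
This theorem is not proved in the paper at all: it is quoted from Masur--Minsky's \emph{Quasiconvexity in the curve complex} (2004), whose argument runs through nested sequences of train tracks rather than disk surgery. So the relevant question is whether your sketch stands on its own, and as written it does not. The reduction you make is fine: if surgery sequences of meridian disks were unparametrized quasi-geodesics in $\calc(S)$ with uniform constants, then hyperbolicity of $\calc(S)$ and stability of quasi-geodesics would give quasi-convexity of $\cald(H)$. But that quasi-geodesic property \emph{is} the theorem, and your proposal defers it: you name it as ``the hard part'' and offer only the observation that $i(\partial F_j,\partial D_n)$ strictly decreases together with ``a logarithmic estimate of Hempel type.'' These two ingredients do not combine into a quasi-geodesic bound. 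Hempel's inequality $d(x,y)\leq 2\log_2 i(x,y)+2$ only bounds distances from above, and a monotone intersection number gives no lower bound on progress in $\calc(S)$: a surgery path of length comparable to $i(\partial D_0,\partial D_n)$ could in principle wander inside a bounded set or backtrack, and nothing you have said rules this out. Controlling exactly this meandering is where the genuine content lies; in the literature it is handled either by Masur--Minsky's nesting lemma for train tracks, or (for surgery sequences of disks) by the much more elaborate machinery of Masur--Schleimer's \emph{The geometry of the disk complex}, involving holes and subsurface projection bounds. Without some substitute for that input, your outline is a plausible reduction plus an unproved core claim, not a proof.

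Two smaller points. First, ``keeping an essential disk at each stage (which exists since $H$ is irreducible)'' misattributes the issue: irreducibility is not what guarantees that a surgered disk has boundary essential in $S=\partial H$; one needs the standard argument that at least one of the two disks produced by an outermost surgery is essential, which should be stated and used correctly. Second, you should check that consecutive disks in your sequence have disjoint (or uniformly close) boundaries after isotopy, so that the sequence really is a path in $\cald(H)$; this is true for outermost surgery but is an explicit verification, not automatic from the construction as you phrased it.
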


A subset $A$ in a geodesic metric space $X$ is \emph{quasi-convex} if there is some constant $R\geqslant0$ such that any geodesic with endpoints in $A$ is contained in the $R$-neighborhood of $A$.

For each $f \in\MCG(S)$, $f(\cald(H))$ is the disk set of another handlebody bounded by $S$. Now we have an
electrification of $\calc(S)$  by these connected subsets: for each disk set of a handlebody $W$ bounded by $S$, we adding a new vertex $v_{W}$ to $\calc(S)$, and we add a  length one edge from $v_{W}$ to every simple closed curve $c \in \calc^{0}(S)$  which bounds a disk in $W$,  the resulting complex is called the \emph{handlebody graph}  $\mathcal{HG}(S)$ \cite{MaherSchleimer:2018}.

\begin{thm}\emph{(Maher-Schleimer) \cite{MaherSchleimer:2018} } The handlebody graph  $\mathcal{HG}(S)$ is $\delta^\prime$-hyperbolic with infinite diameter for some $\delta^\prime\geqslant0$.
\end{thm}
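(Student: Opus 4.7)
The plan is to present $\mathcal{HG}(S)$ as the electrification of the curve complex $\calc(S)$ along the $\MCG(S)$-orbit $\{f\cald(H) : f\in\MCG(S)\}$ of the disk set. Since $\MCG(S)$ acts by simplicial isometries on $\calc(S)$ and $\cald(H)$ is $K$-quasi-convex by Theorem \ref{thm:quasiconvex}, every translate $f\cald(H)$ is $K$-quasi-convex with the \emph{same} constant $K$. The handlebody graph is obtained from $\calc(S)$ by adjoining, for each such translate, a cone vertex joined by length-one edges to every curve in that translate.

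For the hyperbolicity assertion I would appeal to the standard cone-off principle: coning off a uniformly quasi-convex family of subsets in a $\delta$-hyperbolic space yields a hyperbolic space. Concretely, for $x,y\in\mathcal{HG}(S)$ one constructs a \emph{preferred path} by taking a $\calc(S)$-geodesic from $x$ to $y$, detecting its maximal excursions that stay inside a single translate $f\cald(H)$, and replacing each such excursion by the length-two detour through the corresponding cone vertex $v_{fH}$. Uniform quasi-convexity of the coned subsets plus $\delta$-thin triangles in $\calc(S)$ imply that triangles of preferred paths in $\mathcal{HG}(S)$ are uniformly thin, producing a hyperbolicity constant $\delta'$.

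For infinite diameter, the strategy is to exhibit a pseudo-Anosov $\phi\in\MCG(S)$ that acts loxodromically on $\mathcal{HG}(S)$, which yields $d_{\mathcal{HG}}(v_H,\phi^n v_H)\to\infty$. One selects $\phi$ whose stable and unstable laminations are not in the closure of the disk set $\cald(H)$ (nor of any $\MCG$-translate), which is possible because the limit set of $\cald(H)$ is a proper subset of the Gromov boundary of $\calc(S)$. Then the nearest-point projection $\pi_{f\cald(H)}:\calc(S)\to f\cald(H)$ guaranteed by quasi-convexity sends the $\phi$-orbit to a bounded set in each $f\cald(H)$, so each cone-vertex shortcut can collapse only a bounded amount of $\calc(S)$-distance and the unbounded growth of $\phi^n$ in $\calc(S)$ survives the electrification.

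The main obstacle is this last quantitative control, since the $\phi$-orbit typically passes through infinitely many \emph{different} translates $f_i\cald(H)$ and one must rule out that the electrification concatenates them into a bounded-length path. The required tool is a ``bounded projection'' estimate, in the spirit of the Masur-Minsky Bounded Geodesic Image theorem but applied to the family $\{f\cald(H)\}$ rather than to subsurfaces: any long $\calc(S)$-geodesic meets a given $f\cald(H)$-neighborhood only in a uniformly controlled subsegment. With such an estimate in hand, a telescoping/pigeonhole argument comparing the number of distinct translates visited with the $\calc(S)$-translation length of $\phi$ forces $d_{\mathcal{HG}}(v_H,\phi^n v_H)$ to grow linearly in $n$.
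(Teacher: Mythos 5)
A preliminary remark: the paper does not prove this statement at all --- it is imported from Maher--Schleimer \cite{MaherSchleimer:2018} and used as a black box (indeed the paper's own key result, Theorem \ref{thm:key}, is deduced \emph{from} it). So your proposal has to stand on its own. The hyperbolicity half is fine in outline: $\mathcal{HG}(S)$ is, up to quasi-isometry, the electrification of $\calc(S)$ along the $\MCG(S)$-orbit of $\cald(H)$; by Theorem \ref{thm:quasiconvex} and equivariance these translates are quasi-convex with one constant, and coning off a uniformly quasi-convex family of subsets of a $\delta$-hyperbolic space yields a hyperbolic space (Kapovich--Rafi/Bowditch type statements, no separation hypothesis needed). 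That part is standard.

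The infinite-diameter half has a genuine gap, and it sits exactly where you flag ``the main obstacle''. The BGI-style estimate you invoke --- that any long $\calc(S)$-geodesic meets a given neighborhood $N_B(f\cald(H))$ only in a uniformly bounded subsegment --- is false as a statement about arbitrary geodesics: disk sets have infinite diameter in $\calc(S)$, so a geodesic can travel inside such a neighborhood for arbitrarily long. Any such estimate can only hold for a specially chosen axis, and it must hold \emph{uniformly over the infinitely many translates} $f\cald(H)$; note that every essential simple closed curve is a meridian of some handlebody, so the axis passes within distance $1$ of coned sets at every point, and the only hope is a uniform diameter bound on the intersections, not avoidance. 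Choosing $\Lambda_{\pm}(\phi)$ outside the limit set of each translate does give, for each \emph{fixed} $f$, a bounded nearest-point projection of the axis onto $f\cald(H)$, but with a bound depending on $f$; nothing in your argument upgrades this to a single constant valid for all translates, and without that single constant the telescoping/pigeonhole step never starts. In fact the uniform statement you need is essentially Theorem \ref{thm:key} of this paper, which is proved there by \emph{using} the Maher--Schleimer theorem, so within the logic of this paper your route is circular, and as a standalone proof it is missing its central estimate; establishing that estimate (or some substitute for it) is precisely the nontrivial content of \cite{MaherSchleimer:2018}, which does not follow from uniform quasi-convexity plus a genericity choice of laminations.
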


\section{Infinite dimension of $\QH(\MCG(S),\HG)$}

In this section,we prove our theorems. First, we need a special pseudo-Anosov map in the mapping class group of $S$.

\subsection{A key theorem}

\begin{thm}\label{thm:key}
Let $H$ be a handlebody with genus $g>1$ and $S=\partial H$. There exists a pseudo-Anosov  map $\phi \in \MCG(S)$ that satisfy the following property: let $\ell$ be its quasi-axis in the curve complex $\calc(S)$, then for any $B >0$,  there is an $L>0$ such that there exist no length $L$ segment $\alpha$ of $\ell$ and $g \in \MCG(S)$ with $g(\alpha)$ contained in the $B$-neighborhood of the disk set $\cald(H) \subset \calc(S)$.
\end{thm}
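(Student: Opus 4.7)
The plan is to leverage Maher-Schleimer's hyperbolicity of the handlebody graph $\mathcal{HG}(S)$ together with their characterization of its loxodromic isometries. Concretely, I will pick a pseudo-Anosov $\phi\in\MCG(S)$ acting loxodromically on $\mathcal{HG}(S)$ and then show that loxodromicity alone forces any $\calc(S)$-quasi-axis of $\phi$ to escape every fixed $B$-neighborhood of every translate of $\cald(H)$, provided one takes a long enough subsegment.

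First I would produce $\phi$. By Maher-Schleimer's Corollary 6.10, a pseudo-Anosov acts loxodromically on $\mathcal{HG}(S)$ precisely when no power of it partially extends to a handlebody bounded by $S$, and Theorem \ref{thm:notpartialextend} guarantees such pseudo-Anosovs (in fact inside every Johnson subgroup). Fix such $\phi$, write $\tau_{\mathcal{HG}}>0$ and $\tau_{\calc}>0$ for its asymptotic translation lengths on $\mathcal{HG}(S)$ and $\calc(S)$ respectively, and let $\ell\subset\calc(S)$ be a quasi-axis.

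Next, suppose the conclusion fails: there is $B>0$ such that for every $L$ one can find a length-$L$ subsegment $\alpha_L\subset\ell$ and $g_L\in\MCG(S)$ with $g_L(\alpha_L)\subset N_B(\cald(H))$. Since $\MCG(S)$ acts by isometries on $\calc(S)$, equivalently $\alpha_L\subset N_B(\cald(H_L))$ for the handlebody $H_L:=g_L^{-1}(H)$. By construction of $\mathcal{HG}(S)$ the cone vertex $v_{H_L}$ is joined by an edge to every vertex of $\cald(H_L)$, so every vertex of $\alpha_L$ lies within $\mathcal{HG}(S)$-distance $B+1$ of $v_{H_L}$, and therefore
\[
\mathrm{diam}_{\mathcal{HG}(S)}(\alpha_L)\le 2(B+1).
\]
On the other hand, since $\ell$ is coarsely $\phi$-invariant and $\phi$ has positive $\calc(S)$-translation length, the two endpoints of $\alpha_L$ lie within a uniformly bounded $\calc(S)$-distance (hence a uniformly bounded $\mathcal{HG}(S)$-distance) of some pair $\phi^{a_L}(x_0),\phi^{b_L}(x_0)$ with $|b_L-a_L|\ge L/(2\tau_{\calc})\to\infty$ as $L\to\infty$. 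Loxodromicity of $\phi$ on $\mathcal{HG}(S)$ then yields
\[
d_{\mathcal{HG}(S)}\bigl(\phi^{a_L}(x_0),\phi^{b_L}(x_0)\bigr)\ge \tau_{\mathcal{HG}}\cdot|b_L-a_L|-C,
\]
which overruns the uniform bound $2(B+1)+O(1)$ for $L$ large, the desired contradiction.

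The main obstacle is the first step, i.e.\ justifying that we may choose $\phi$ to act loxodromically on $\mathcal{HG}(S)$; the rest is a soft electrification estimate exploiting the hyperbolicity of $\calc(S)$ and the coning construction of $\mathcal{HG}(S)$. This is exactly where Maher-Schleimer's machinery is essential, since at the purely $\calc(S)$ level one has no direct control on translates of the disk set $\cald(H)$, which is only quasi-convex and not quasi-isometrically embedded.
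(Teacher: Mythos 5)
Your overall architecture coincides with the paper's: produce a pseudo-Anosov $\phi$ whose orbit makes linear progress in the handlebody graph $\mathcal{HG}(S)$, and then run the electrification estimate (a length-$L$ piece of the $\calc(S)$-quasi-axis trapped in $N_B(\cald(g^{-1}H))$ has $\mathcal{HG}(S)$-diameter at most $2(B+1)$, while loxodromicity forces its endpoints to be far apart in $\mathcal{HG}(S)$). Your second step is essentially the paper's concluding paragraph, written out correctly, and is fine.

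The gap is exactly where you located "the main obstacle": the input that $\phi$ acts loxodromically on $\mathcal{HG}(S)$. You obtain it from a claimed biconditional ("a pseudo-Anosov is loxodromic on $\mathcal{HG}(S)$ precisely when no power partially extends") together with Theorem \ref{thm:notpartialextend}, but the direction you actually need --- \emph{no (power of) partial extension implies loxodromic} --- is the hard one, and it is not what the paper extracts from \cite{MaherSchleimer:2018}: the paper quotes only the existence statement of Theorem \ref{thm:notpartialextend}, which follows from the \emph{easy} direction (a partially extending power keeps an orbit within distance $2$ of a single cone vertex). The converse is delicate: every simple closed curve is a meridian of some handlebody, so the union of handlebody limit sets is dense in $\mathcal{PML}(S)$, and "no power partially extends" (via Biringer--Johnson--Minsky this concerns containment of $\Lambda_{\pm}(\phi)$ in a single handlebody limit set) does not by itself give the uniform bound on fellow-travelling of the axis with \emph{all} translates of $\cald(H)$ that loxodromicity requires; note also that Theorem \ref{thm:notpartialextend} as stated only controls $\phi$ itself, not its powers. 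The paper sidesteps any such characterization: it builds $\phi=gf$ from pseudo-Anosov elements $f,g$ lying in the handlebody groups of two handlebodies $V,W$ with $d_{\mathcal{HG}}(v_V,v_W)$ large and with large displacement of the coarse projections $P_V(W),P_W(V)$, and proves by hand (Claim \ref{claim:quasigeodesic}, using hyperbolicity of $\calc(S)$ and Theorem \ref{thm:quasiconvex}) that the resulting orbit path is an infinite quasi-geodesic in $\mathcal{HG}(S)$. To repair your argument you should either cite an existence statement for loxodromic pseudo-Anosov elements on $\mathcal{HG}(S)$ directly (which is what underlies Theorem \ref{thm:notpartialextend}), or carry out a construction and quasi-geodesic verification as in the paper; as written, the citation does not cover the implication your proof rests on.
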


\begin{proof}
We fix a projection map  $\Pi: \calc(S) \rightarrow\mathcal{HG}(S)$, where for any $c \in \calc^{0}(S)$,  $\Pi(c)$ is $v_{W}$ for a  handlebody $W$ such that $c$ bounds a disk in $W$.  Note that  $\Pi$ is  well-defined up to diameter one sets.  We also  take two handlebodies $V$ and $W$ bounded by $S$ such that $d_{\mathcal{HG}}(v_{V},v_{W})$ is sufficiently large.  Now let $P_{W}(V)$ be any disk in $W$ such that
$d_{\calc(S)}(\cald(V),\cald(W))=d_{\calc(S)}(\cald(V),P_{W}(V))$,  where $d_{\calc(S)}(A,B)$ for two subsets $A$ and $B$ in $\calc(S)$ we mean the minimal distance of two points $a, b$ in  $A$ and $B$ respectively. Note that since $d_{\mathcal{HG}}(v_{V},v_{W})$ is sufficiently large, in particular $d_{\calc(S)}(\cald(V),\cald(W))$ is sufficiently large, and by Theorem \ref{thm:quasiconvex}, $P_{W}(V)$ is coarsely defined in $\calc(S)$ with constant depends by the hyperbolicity constant of $\calc(S)$ and the quasi-convexity constant of the disk set into $\calc(S)$.  Similarly we can define $P_{V}(W)$ which is the boundary of a disk in $V$.

Now take a pseudo-Anosov map $f \in \MCG(S)$ which extends to $V$ (i.e., $f$ lies in the handlebody of $V$) such that $d_{\calc(S)}(P_{V}(W), f(P_{V}(W))$ is sufficiently large. We also choose a  pseudo-Anosov map $g$ lies in the handlebody group of $W$ with similar property.

Note that $\MCG(S)$ acts naturally on the handlebody graph $\mathcal{HG}$. Let $\phi=gf$, and  $A$ be a geodesic in $\mathcal{HG}$ which connecting $v_{V}$ to $v_{W}$, and $B$ be a geodesic in $\mathcal{HG}$ which connecting $c_{W}$ to $\phi(v_{V})=gf(v_{V})=g(v_{V})$.  Then we have an infinite length piecewise geodesic path $\alpha=\cdots \phi^{-1}(A)\cup \phi^{-1}(B)\cup A\cup B \cup \phi(A) \cup \phi(B)\cup \phi^{2}(A) \cup \phi^{2}(B)\cdots$ in $\mathcal{HG}$. Since $\phi(\alpha)=\alpha$, then by Claim \ref{claim:quasigeodesic} below,  $\phi$ is a pseudo-Asonov map in $\MCG(S)$.

\begin{claim}\label{claim:quasigeodesic} $\alpha$ is a quasi-geodesic in $\mathcal{HG}$ of infinite length.
\end{claim}

Proof of Claim \ref{claim:quasigeodesic}: We take an oriented geodesic $A'$ in $\calc(S)$ which connecting  $P_{V}(W)$ to  $P_{W}(V)$, and take a geodesic  $C'$ in $\calc(S)$ which connecting  $P_{W}(V)$ to  $g(P_{W}(V))$, now let $B'$  be $g(A')$ but with inverse orientation, then it  is a geodesic connecting $g(P_{W}(V))$ to $g(P_{V}(W))$.  Let $E'$ be an oriented geodesic connecting  $P_{V}(W)$ to  $g(P_{W}(V))$, and $F'$ be an oriented geodesic connecting  $P_{V}(W)$ to  $g(P_{V}(W))$. By hyperbolicity of  $\calc(S)$ and quasi-convexity of $\cald(W)$ in  $\calc(S)$, we have there is a constant $k$, such that except the $k$-neighborhood  of $P_{W}(V)$, any point in $A' \cup C'$ lies in a bounded neighborhood of $E'$. By the same reason, most part of $C' \cup B'$ lies in in a bounded neighborhood of $F'$. All these together, we have $A' \cup C' \cup B'$ is a quasi-geodesic in $\calc(S)$.

Let $G'$ be a geodesic connecting  $P_{V}(W)$ to $f(P_{V}(W))$. Similar, we have $\phi^{-1}(B')\cup G' \cup A'$ is a quasi-geodesic in $\calc(S)$. Now we connect all of
$\{\phi^{i}(A')\}^{i=\infty}_{i=-\infty}$, $\{\phi^{i}(B')\}^{i=\infty}_{i=-\infty}$, $\{\phi^{i}(C')\}^{i=\infty}_{i=-\infty}$ and $\{\phi^{i}(G')\}^{i=\infty}_{i=-\infty}$
into an infinite length path $\alpha'$.  $\alpha'$ is piece-wisely geodesic,  but any three consecutive geodesic segment of it with middle part $\phi^{i}(C')$ or $\phi^{i}(G')$ is a uniform quasi-geodesic. Since the length of $C'$ and $G'$ is large, then it is standard in large scale geometry that $\alpha'$ is a quasi-geodesic.

 Note that since we electrification quasi-convexity subset to obtain  $\mathcal{HG}$, $\Pi(\alpha')$ is a re-parameterized  quasi-geodesic in $\mathcal{HG}$ for a uniform constant (see also \cite{MaherSchleimer:2018} ). Now $\alpha$  is parallel to $\Pi(\alpha')$, so $\alpha$ is a quasi-geodesic in $\mathcal{HG}$ of infinite length. This ends the proof of Claim \ref{claim:quasigeodesic}.


Now take a quasi-axis  $\ell$ of $\phi$ in the curve complex $\calc(S)$,  we claim for any $B\in \mathbb{R}_{\geq 0}$, there is an $L\in \mathbb{R}_{\geq 0}$ such that for any handlebody $Z$ bounded by $S$, each component of $\ell \cap N_{B}(\cald(Z))$ has diameter bounded above by $L$. Suppose otherwise, there is a $B$, for any $L\in \mathbb{R}_{\geq 0}$ there is a handlebody $Z_{L}$ such that $diam (\ell \cap N_{B}(\cald(Z_{L}))) \geq L$. Fix a point $P \in \ell$, up to translating these handlebodies by $\phi$, we have a sequences of handlebodies $Z_{L}$, $L \rightarrow \infty$, such that $P \in  N_{B'}(\cald(Z_{L}))$ for another $B' \in \mathbb{R}_{\geq 0}$ and $diam (\ell \cap N_{B'}(\cald(Z_{L}))) \geq L$, this is a contradiction to that  $\Pi(\ell)$ is an infinite length quasi-geodesic which is parallel to $\alpha$.

\end{proof}

\subsection{Proof of main theorems}

With Theorem \ref{thm:key} in hand, then the proof of Theorem \ref{thm:main} is standard after Bestvina-Fujiwara \cite{Bestvina:2002dr,Bestvina:2007et}, we just outline the proof,  for more details see  \cite{Bestvina:2002dr,Bestvina:2007et}.

\begin{proof}[Proof of Theorem \ref{thm:main}]We fix a disk $d_0$ in the disk set  $\cald(H)$ of the handlebody $H$.

Step 1: Let $\omega$, $\alpha$ be two oriented finite paths in  $\calc(S)$, for a mapping class  $g \in \MCG(S)$, we say $g(\omega)$ is a \emph{translation} of $\omega$. Let $|\alpha|_{\omega}$ denote the maximal number of non-overlapping translations of $\omega$ in $\alpha$. Fixed $W$ be a nonzero integral smaller than the length $|\omega|$ of $\omega$. For two vertices $x,y$ in $\calc^{0}(S)$, we can define

$$C_{\omega, W}(x,y)\triangleq d_{\calc(S)}(x,y)-\inf_{\alpha}(|\alpha|-W |\alpha| _{\omega}),$$   where $\alpha$ ranging over all paths from $x$ to $y$.

\begin{lemma}[Fujiwara \cite{Fujiwara:1998}]\label{lem:quasigeodesic}  Fix $W \in \mathbb{Z}_{+}$, if $\alpha$ is a path  realizes the infimum in the definition of $C_{\omega, W}(x,y)$ above, then  $\alpha$ is a quasi-geodesic with constants decreasing on $|\omega|$.
\end{lemma}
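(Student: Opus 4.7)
The plan is to run the standard variational comparison of Fujiwara: since $\alpha$ minimizes $|\alpha|-W|\alpha|_\omega$, any sub-arc of $\alpha$ cannot be much longer than a geodesic between its endpoints, because replacing that sub-arc by a geodesic would otherwise strictly decrease the functional.

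Concretely, let $\alpha$ be a path from $x$ to $y$ realizing the infimum in the definition of $C_{\omega,W}(x,y)$, pick any two points $p,q$ on $\alpha$, and write $\alpha=\alpha_1\cdot\alpha'\cdot\alpha_2$, where $\alpha'$ is the sub-arc from $p$ to $q$. Let $\beta$ be a $\calc(S)$-geodesic from $p$ to $q$ and form the competitor $\alpha''=\alpha_1\cdot\beta\cdot\alpha_2$. Minimality of $\alpha$ gives $|\alpha|-W|\alpha|_\omega\leq|\alpha''|-W|\alpha''|_\omega$, which I would rewrite, after cancelling $|\alpha_1|+|\alpha_2|$ from both sides, as
$$|\alpha'|\;\leq\;d(p,q)+W\bigl(|\alpha|_\omega-|\alpha''|_\omega\bigr).$$

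The crux is bounding $|\alpha|_\omega-|\alpha''|_\omega$ in terms of $|\alpha'|$. Every non-overlapping translate of $\omega$ lying entirely inside $\alpha_1$ or $\alpha_2$ survives in $\alpha''$, so $|\alpha''|_\omega\geq|\alpha_1|_\omega+|\alpha_2|_\omega$. Dually, a maximal non-overlapping family realizing $|\alpha|_\omega$ splits into translates lying entirely inside $\alpha_1$, inside $\alpha'$, or inside $\alpha_2$, together with at most two additional translates straddling the two splice points, so $|\alpha|_\omega\leq|\alpha_1|_\omega+|\alpha'|_\omega+|\alpha_2|_\omega+2$. Combining these estimates with the trivial bound $|\alpha'|_\omega\leq|\alpha'|/|\omega|$ yields
$$|\alpha'|\;\leq\;d(p,q)+\frac{W}{|\omega|}\,|\alpha'|+2W.$$

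Since $W$ is a positive integer strictly less than $|\omega|$, the coefficient $1-W/|\omega|$ is strictly positive and I can solve for $|\alpha'|$:
$$|\alpha'|\;\leq\;\frac{|\omega|}{|\omega|-W}\bigl(d(p,q)+2W\bigr).$$
This is the desired quasi-geodesic inequality applied to every sub-arc $\alpha'$ of $\alpha$; the multiplicative constant $|\omega|/(|\omega|-W)$ tends to $1$ and the additive constant $2W|\omega|/(|\omega|-W)$ tends to $2W$ as $|\omega|\to\infty$, so the quasi-geodesic constants decrease with $|\omega|$, as claimed. The only genuine subtlety is the boundary bookkeeping, namely checking that at most two translates of $\omega$ in a maximal non-overlapping family on $\alpha$ can straddle each interior cut point; once that is in hand the rest is elementary algebra.
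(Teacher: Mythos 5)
Your argument is correct, and it is essentially the standard surgery/comparison proof of this lemma from Fujiwara's paper, which the present paper simply cites (via \cite{Fujiwara:1998}) without reproducing a proof: replace a subarc by a geodesic, use minimality, and control the loss of at most $|\alpha'|_\omega+2$ copies of $\omega$, with the condition $0<W<|\omega|$ giving the positive coefficient $1-W/|\omega|$. The boundary bookkeeping you flag (at most two straddling translates, since the copies are non-overlapping and there are only two cut points) is handled correctly, so there is nothing further to add.
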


Fix $W \in \mathbb{Z}_{+}$ smaller than $|\omega|$, we can define

$$h_{\omega: } \MCG(S) \rightarrow \bbr$$ as
$$h_{\omega: }(g)\triangleq C_{\omega, W}(d_0, g(d_0))-C_{\omega^{-1}, W}(d_0, g(d_0)).$$

Then  from Proposition 3.10 of \cite{Fujiwara:1998},  $h_{\omega}$ is a quasi-homomorphism with  defect uniformly bounded above  independent of  $\omega$.
\vspace{10pt}

Step 2: Choose a pseudo-Anosov map $\phi$ as in  Theorem \ref{thm:key}, we also choose another pseudo-Anosov map $\psi$ such that $\{\Lambda_{+}(\phi),\Lambda_{-}(\phi)\}\cap\{\Lambda_{+}(\psi),\Lambda_{-}(\psi)\}=\emptyset$, where $\Lambda_{+}(\phi)$ ($\Lambda_{-}(\phi)$) is the stable (unstable) measured lamination where $\phi$ fixed in the projective measured lamination space of $S$.   After passing to high powers of $\phi$ and $\psi$, we may assume $\phi$ and $\psi$ generates a Schottky subgroup $F$ of $\MCG(S)$. In particular, every element in $F$ except the identity is pseudo-Anosov.  There is a $F$-equivariant map $\rho$ from a Cayley graph of $F$ to $\calc(S)$, which is a quasi-isometric embedding.  So for each $f \in F$,  we can take an axis of it in the Cayley graph and then using the map $\rho$,  we get a uniform quasi-axis for the action of $f$ on $\calc(S)$.

\vspace{10pt}

Step 3: Choose $n_{i}, m_{i}, k_{i}, l_{i} \in \mathbb{Z}$, such that

 $$0 \ll n_{1} \ll m_{1} \ll k_{1} \ll l_{1} \ll n_{2} \ll m_{2} \ll k_{2} \ll l_{2} \cdots$$

 and we can define $f_{i}=\phi^{n_i}\psi^{m_i}\phi^{k_i}\psi^{-l_i}$. By Bestvina-Fujiwara \cite{Bestvina:2002dr}, any two elements in the set $\{f_{i}^{\pm}\}^{\infty}_{i=1}$ are  not equivalent as actions on $\calc(S)$.  It turns out that if we choose $\omega_{i}$  be a  sufficiently long segment of a uniform quasi-axis of $f_{i}$, and fix $W\ \in \mathbb{Z}$ which is smaller than each $|\omega_{j}|$, then the quasi-homomorphisms $h_{\omega_{i}}$ are linearly independent in $\QH(\MCG(S))$ \cite{Bestvina:2002dr}. For the  finite collection of cyclic subgroups $C_{1}, C_{2}, \ldots, C_{n} \subset \MCG(S)$ generated by $g_{1}, g_{2}, \ldots, g_{n}$,  we can delete finitely many elements  in our set  $\{f_{i}\}^{\infty}_{i=1}$, such that for each of the remaining $f_{j}$, $f^{\pm}_{j}$ is not equivalent to any $g_{i}$, $1 \le g  \le n$, then by Proposition  5.1 of \cite{Bestvina:2002dr}, $h_{\omega_{j}}$ is zero on each $C_{i}$.

 \vspace{10pt}
 Step 4:  We can choose $\omega_{j}$ in Step 3 such that  it contains a copy of a segment of $\ell$ which has length $L$ sufficiently large. From the choice of
 $\ell$, we have for any $f \in \MCG(S)$, $f(\omega_{j})$  is not in the $B$-neighborhood of the disk set  $\cald(H)$. For any $g$ in the handlebody group $\HG$, $g(d_{0}) \in \cald(H)$, then by Theorem  \ref{thm:quasiconvex},  $\cald(H)$ is a quasi-convex subset of  $\calc(S)$, any uniform quasi-geodesic  connecting $d_{0}$ and $g(d_{0})$ lies in the  $B$-neighborhood of the disk set  $\cald(H)$. Then by the definition of $C_{\omega, W}$, we have both of  $C_{\omega_{j}, W}(d_{0},g(d_{0}))$ and $C_{\omega^{-1}_{j}, W}(d_{0},g(d_{0}))$ are zero,  so $h_{\omega_{j}}(g)$ is zero.
\end{proof}

\begin{proof}[Proof of Theorem \ref{thm:generalization}]
 Fix $\phi=\phi_{1}$ in  as in  Theorem \ref{thm:key}, for any $k \in \mathbb{Z}$, since $\JG_k(S)$ is a nontrivial normal subgroup $\MCG(S)$, we can take three  pseudo-Anosov  maps $\phi_{1}$,  $\phi_{2}$ and $\phi_{3}$ in  $\JG_k(S)$ that are pairwise independent, i.e.,  $\{\Lambda_{+}(\phi_{i}),\Lambda_{-}(\phi_{i})\}\cap\{\Lambda_{+}(\phi_{j}),\Lambda_{-}(\phi_{j})\}=\emptyset$ for $i\neq j$.  After passing to high powers of $\phi_{i}$, we may assume $\{\phi_{i}\}^{3}_{i=1}$ generate a rank three Schottky subgroup $F$ of $\MCG(S)$.  In particular, $\phi=\phi_{1}^{a}\phi_{2}^{b}\phi_{1}^{-a}\phi_{3}^{c}$ is a
 pseudo-Anosov map in $\JG_k(S)$ for $a\ll b\ll c$ in $\mathbb{Z}$. Moreover, using the  quasi-isometric $F$-equivariant map  from a Cayley graph of $F$ to $\calc(S)$, we get that a quasi-axis of  $\phi$ on $\calc(S)$ contains a long segment of $\ell$.

 With this new $\phi \in\JG_k(S)$, we choose a new $\psi \in \JG_k(S)$, then re-work Steps 1-4 in the proof of Theorem \ref{thm:main}, we get Theorem \ref{thm:generalization}.
\end{proof}

\section*{Acknowledgement}
The proof of Theorem  \ref{thm:key} using handlebody graph  was suggested by Saul Schleimer when Jiming Ma visited  University of Warwick in  2013, the authors would like to thank Saul for his help and interest in this paper. Jiming Ma was partially supported by NSFC   11771088 and Jiajun Wang was partially supported by NSFC 11131008.
\bibliographystyle{amsplain}

\end{document}